\definecolor{labelkey}{rgb}{0.6,0,1}
\newtheorem{theorem}{Theorem}[section]
\newtheorem{lemma}[theorem]{Lemma}
\theoremstyle{definition}
\newtheorem{remark}{Remark}
\newcommand{\eps}{\varepsilon}
\def\ctel#1{\ensuremath{\Cl[ctrcst]{#1}}}
\def\cter#1{\ensuremath{\Cr{#1}}}
\def\act#1#2#3#4{\langle#1,#2\rangle_{#3,#4}}  % produit de	dualite
\def\benum{\begin{enumerate}}
\def\benumc{\begin{enumerate}}
\def\be{\begin{equation}}
\def\dsp{\displaystyle}
\def\eenum{\end{enumerate}}
\def\enumc{\end{enumerate}}
\def\ee{\end{equation}}
\def\grad{\nabla}
\newcommand{\hunz}{{H^1_0(\O)}}
\def\hunpo{H^{-1}(\O)}
\def\nnnr{{n \in \mathbb{N}^\star}}
\def\norm#1#2{\left\| #1 \right\|_{#2}}
\def\O{\Omega}
\def\R{\mathbb{R}}
\def\N{\mathbb{N}}
\def\trm{\textrm}
\def\vphi{\varphi}
\newcommand{\ldo}{{L^2(\O)}}
\newcommand{\dx}{\ \mathrm{d}x}
\newcommand{\dt}{\ \mathrm{d} t}
\newcommand{\ds}{\ \mathrm{d} s}
\newcommand{\nnn}{{n\in\mathbb{N}}}
\newcommand{\nti}{{n\to +\infty}}
\title[Convergence  proof for the Stefan problem]
{A new  convergence  proof for \\ approximations of the Stefan problem}
\author{Robert Eymard and Thierry Gallou\"et}
\begin{document}

%%%%%%%%%%%%%%%%%%%%%%%%%%%%%%%%%%%%%%%%%%%%%%%%%%%%%%%
%             5. ABSTRACT
%%%%%%%%%%%%%%%%%%%%%%%%%%%%%%%%%%%%%%%%%%%%%%%%%%%%%%%

\date{}

\begin{abstract}
We consider the Stefan problem, firstly with regular data and secondly with irregular data. In both cases is given a proof for the convergence of an approximation obtained by regularising the problem. These proofs are based on weak formulations and  on compactness results in some Sobolev spaces with negative exponents. 
\end{abstract}
\maketitle

%
%--------------------------------------------------------------------------------------------------------------------------------
%
\section{Introduction}\label{sec-pb}

Let $\Omega$ be an open bounded domain of $\mathbb{R}^d$ ($d \ge 1$), $0<T<+\infty$. Let $\vphi$ be a 
Lipschitz continuous function from $\mathbb{R}$ to $\mathbb{R}$.
We assume that $\vphi$ is a nondecreasing function (but $\vphi$ is not necessarily increasing, it can be constant  on an interval which has a positive Lebesgue measure).
We consider the wellknown Stefan problem, which is a free boundary problem describing the evolution of the boundary between two phases of a material undergoing a phase change.
The model problem reads
\begin{subequations}
\label{stefan-fort}
 \begin{align}
&\partial_t u  - \Delta(\vphi(u))=f \mbox{ on } \Omega\times]0,T[, \label{eq:stefan-fort} \\ 
&u(x,0) = u_0(x)  \mbox{ on } \Omega, \\
&u(x,t) = 0 \mbox{ on } \partial\Omega\times]0,T[
\end{align}
\end{subequations}
In the seminal paper \cite{alt_luckhaus}, Alt and Luckhaus prove the existence of a weak solution to the problem \eqref{stefan-fort} in the case where $f \in L^2(]0,T[ , L^2(\Omega))$, $u_0 \in L^2(\O)$.
This proof is first based on the regularisation of the problem \eqref{stefan-fort} by introducing in Equation \eqref{eq:stefan-fort} an additional term  $- (1/n) \Delta u$, with $n>0$ (the existence (and uniqueness) of a  solution $u_n$ to such a regularised problem is then classical). 
Compactness arguments on the solution $u_n$ to the regularised problem are then derived in the following way:

\begin{itemize}
 \item It is shown that, up to some subsequence, there exists $v\in L^{2}(]0,T[,H^1_0(\O))$ such that  $\vphi(u_n) \to v$   in $L^{2}(]0,T[,\ldo)$ and weakly in $L^{2}(]0,T[,H^1_0(\Omega))$.
 \item It is also shown that, for this subsequence, there exists $u\in L^{2}(]0,T[,\ldo)$ such that  $u_n \to u$  weakly  in $L^{2}(]0,T[,\ldo)$.
 \item Minty's trick is then used to obtain that $v = \vphi(u)$, by weak/strong convergence in $L^{2}(]0,T[,\ldo)$.
\end{itemize}

In Section \ref{sec:regrhs}, we give an alternate proof of this convergence result, stated in Theorem \ref{thm:reg}. 
The originality of the new proof lies in the proof of compactness in a negative exponent Sobolev space, namely $L^2(]0,T[, H^{-1}(\Omega))$ in the case where $f \in L^2(]0,T[, L^2(\Omega))$. 
Indeed, the passage to the limit as $\nti$ is then based on the convergence of the sequence $(u_n)_\nnn$ in $L^2(]0,T[, H^{-1}(\Omega))$ and the weak convergence of the sequence $(\vphi(u_n))_\nnn$ in $L^2(]0,T[, H^1_0(\Omega))$. 
Negative  exponent Sobolev spaces were first used in compactness arguments for a turbulence problem in \cite{gal2012comp}, and also in \cite{moussa2016vari} in order to obtain some generalised Aubin-Lions lemmas.
In \cite{and2017nonlinar}, a related method presented as compensated compactness, is introduced for monotone graph problems.

In  Section \ref{sec:irregrhs}, we consider the case of irregular data $f\in L^{1}(]0,T[,L^1(\O))$ and  $u_0\in L^1(\O)$. 
Note that, in this case, the time compactness cannot be handled by the method used in \cite{alt_luckhaus}, as noticed in \cite{and2017nonlinar}. 
In \cite{igbida2010renorm},  the authors prove the existence and uniqueness of a solution to the Stefan problem with $L^1$ data. The time compactness is obtained by introducing  a time regularisation of the truncations of $\vphi(u^{(n)})$.
The aim of this second part is to obtain a similar result without this additional time regularisation. Instead we use, as in the first part of this paper, some compactness results in negative exponent Sobolev spaces on the solutions of approximate problems.
Indeed, we introduce sequences of functions $(f^{(n)})_\nnn \subset L^{2}(]0,T[,\ldo)$ and $(u_0^{(n)})_\nnn \subset L^2(\O)$ respectively approximating in $L^1$ the functions $f$ and $u_0$. 
Then, denoting $u^{(n)}$ the solution of \eqref{stefan} with data $f^{(n)}$ and $u_0^{(n)}$ instead of $f$ and $u_0$, we study the convergence of $u^{(n)}$ and $\vphi(u^{(n)})$ as $\nti$.

For such irregular data, we only prove this convergence in the case $d=2$ or $d=3$ under Hypothesis \eqref{hyp:surlin}, stating that the function $\vphi$ dominates some linear function at $\infty$: this hypothesis  is highly used in the proof of Lemma \ref{lem:estimhun}, providing some estimates used in the proof of Theorem \ref{thm:irreg}.

\smallskip

We recall that in a Banach space $E$ equipped with a norm $\Vert \cdot \Vert_E$, a sequence $(u_n)_\nnn \subset E$ is said to converge to $u \in E$ if $\Vert u_n - u\Vert_E\to 0$ as $\nti$, while it is said to weakly converge to $u \in E$ if for any continuous linear form $T\in E'$,  $T(u_n) \to  T(u)$ as $\nti$.
A sequence $(T_n)_\nnn \subset E'$ is said to $\star$-weakly converge to $T \in E'$ if for any $u \in E$ $T_n(u) \to  T(u)$ as $\nti$.
If $E=L^p(\Omega)$, where $1 \le p <+\infty$ and $\Omega$ is an open set of $\mathbb{R}^d$, the space $E'$ is identified to $L^q(\Omega)$, $q=p/(p-1)$.
For $T>0$ and $E=L^1(]0,T[, L^2(\Omega))$, we also identify the space $L^\infty(]0,T[, L^2(\Omega))$ with $E'$.

\section{The case of regular data}\label{sec:regrhs}

\subsection{Weak formulations and convergence theorem}

Let $\O$ be an open bounded subset of $\R^d$ with $d\in\N^\star$, and let $T>0$. We consider the following weak sense for a solution to \eqref{stefan-fort}.

\begin{subequations}\label{stefan}
 \begin{align}
	&\left\{\begin{array}{l}
	u \in L^{\infty}(]0,T[,L^2(\Omega)), \partial_t u  \in L^2(]0,T[, H^{-1}(\Omega)), u \in C([0,T], H^{-1}(\Omega)),\\
	\vphi(u) \in L^{2}(]0,T[,H^1_0(\Omega)), \\
	\dsp \int_0^T \left\langle \partial_t u(s) , v(s)\right\rangle_{H^{-1},H^{1}_0 } \ds + \int_0^T \int_\Omega \grad \vphi(u(x,s))  \cdot \nabla v(x,s) \dx \ds  \\
	 \hfill =\dsp \int_0^T \int_\Omega   f(x,s) v(x,s)  \dx \ds , \qquad \forall v \in L^2(]0,T[, H^{1}_0(\Omega)), \\
	\end{array}\right. \label{eq:stefan}
	\\
	& \quad u(\cdot,0 ) = u_0,\label{eq:stefanz}
\end{align}
\end{subequations}
where we denote by  $u(s)$ (resp. $v(s)$) the function $x \mapsto u(x,s)$ (resp. $v(x,s)$).
Since $\ldo$ is identified with $\ldo'$, one has $\hunz \subset \ldo = \ldo' \subset \hunpo$. 
The function $\partial_t u$ is the weak time derivative of $u$ (see Definition 4.22 of \cite{edp-gh})
The fact that $u \in L^2(]0,T[ , H^1_0 (\Omega))$ and $\partial_t u  \in L^2 (] 0, T[ , H^{-1} (\Omega))$ give
$u \in C([0,T],\ldo)$ (see Lemma 4.26 of  \cite{edp-gh}). The function $u$ is defined for all $t \in [0,T]$,
which gives sense to the initial condition $u(0)=u_0$ a.e..
 
We prove in  Section \ref{sec:regrhs} the convergence to a solution of Problem \eqref{stefan} of a solution given by \eqref{eq-pf} below, which is obtained from Problem \eqref{stefan} by the addition of vanishing diffusion $-(1/n)\Delta u$. Indeed, as recalled in the introduction, a classical result for parabolic equations gives, for all $n>0$, the existence of a (unique) solution $u_n$ to the regularisation of \eqref{stefan} in the sense specified in the introduction.
Furthermore, this solution $u_n$ belongs to $L^2(]0,T[, H^1_0 (\Omega)) \cap C([0,T],\ldo)$, and satisfies
\be
\left\{\begin{array}{llllll}
u_n \in L^\infty(]0,T[, H^1_0 (\Omega)), \; \partial_t u_n  \in L^2(]0,T[, H^{-1}(\Omega)), \\
%\mbox{(on a identifi\'e } L^2(\Omega) \mbox{ avec } (L^2 (\Omega))' \mbox{ et  on a } u \in C ( [ 0, T]),\\
%\mbox{ et }\\
\displaystyle \int^T_0 \langle \partial_t u_n(s) , v(s)\rangle_{H^{-1}, H^1_0} \ds \\ \hfill \dsp+ \int^T_0  \int_\Omega (\vphi'(u_n(x,s))+\frac 1 n) \nabla u_n(x,s) \cdot \nabla v(x,s) \dx  \ds
\\ \dsp \hfill = \int^T_0 (\int_\O f(s,x) v(s,x) \dx) \ds,
\qquad \forall v \in L^2(]0,T[, H^1_0(\Omega)),\\
u(0) = u_0 \ \trm{ a.e.}.\end{array}\right.
\label{eq-pf}
\ee
We recall that $\grad \vphi(v)=\vphi'(v) \grad v$ a.e. if $v \in L^2(]0,T[, H^{1}_0(\Omega))$ (see Lemma 4.31 in \cite{edp-gh}).
Existence of $u_n$ can be proven using Schauder's Theorem  and the resolution of linear parabolic equations
by Faedo-Galerkin's method. This method is detailed in \cite[Theorem 4.28]{edp-gh} (for the heat equation) and in \cite[Exercise 4.5]{edp-gh} (corrected) for a more general
diffusion operator. \cite[Exercise 4.6]{edp-gh} (corrected) gives the existence of $u_n$ using Schauder's Theorem.

We state the convergence of $u_n$ in the following theorem.

\begin{theorem}[Convergence of the regularisation method]\label{thm:reg} Let $\O$ be an open bounded subset of $\R^d$, with $d\in\N^\star$ and let $T>0$. Let $\vphi$ be a nondecreasing Lipschitz continuous function. Let  $f\in L^2(]0,T[,L^2(\O))$ and  $u_0\in L^2(\O)$ be given. 
 Let $(u_n)_\nnnr$ be the solution of \eqref{eq-pf} for all $\nnnr$. Then there exists $u$, solution of \eqref{stefan}, such that, as $n\to +\infty$ up to some subsequence,
 \begin{itemize}
  \item $u_n$ converges to $u$ in $C([0,T],H^{-1}(\O))$ and $\star$-weakly in $L^\infty(]0,T[,L^2(\O))$,
  \item $\vphi(u_n)$ converges to $\vphi(u)$ in $L^2(]0,T[,H^1_0(\O))$.
 \end{itemize}

\end{theorem}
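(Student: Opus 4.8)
The plan is to derive uniform a priori estimates, to obtain compactness of $(u_n)$ in the negative-order space $\hmun$ by an Aubin--Lions argument, to identify the nonlinear limit of $\vphi(u_n)$ by Minty's monotonicity trick exploiting an $\hmun$/$\hunz$ duality, and finally to upgrade the weak convergence of $\vphi(u_n)$ to strong convergence by comparing energies. First I would test \eqref{eq-pf} with $v=u_n$, which is licit since $u_n\in\ldhz$, and use the chain rule $\int_0^T\langle\partial_t u_n(s),u_n(s)\rangle_{H^{-1},H^1_0}\ds=\tfrac12\Vert u_n(T)\Vert_{\ldo}^2-\tfrac12\Vert u_0\Vert_{\ldo}^2$. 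Since $\vphi'\ge0$, both $\int_0^T\int_\Omega\vphi'(u_n)|\grad u_n|^2\dx\ds$ and $\tfrac1n\int_0^T\int_\Omega|\grad u_n|^2\dx\ds$ are nonnegative, so a Young/Gronwall argument on the $f u_n$ term yields a bound on $\Vert u_n\Vert_{L^\infty(]0,T[,\ldo)}$ together with $\int_0^T\int_\Omega\vphi'(u_n)|\grad u_n|^2\dx\ds\le C$ and $\tfrac1n\int_0^T\int_\Omega|\grad u_n|^2\dx\ds\le C$, uniformly in $n$. Testing next with $v=\vphi(u_n)$ and using $\grad\vphi(u_n)=\vphi'(u_n)\grad u_n$ together with the convex chain rule for $\Phi$, $\Phi'=\vphi$, gives a uniform bound on $\Vert\vphi(u_n)\Vert_{\ldhz}$. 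Finally, reading \eqref{eq-pf} as $\partial_t u_n=\Delta\vphi(u_n)+\tfrac1n\Delta u_n+f$ and noting $\Vert\tfrac1n\grad u_n\Vert_{\ldoT}\le C/\sqrt n$, I obtain a uniform bound on $\partial_t u_n$ in $\ldhp$.

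Since $\ldo$ embeds compactly into $\hmun$ (Rellich plus duality), the bounds on $u_n$ in $L^\infty(]0,T[,\ldo)$ and on $\partial_t u_n$ in $\ldhp$ let me apply an Aubin--Lions--Simon lemma to extract a subsequence such that $u_n\to u$ in $C([0,T],\hmun)$, $u_n\to u$ $\star$-weakly in $L^\infty(]0,T[,\ldo)$ and hence weakly in $\ldoT$, $\vphi(u_n)\rightharpoonup w$ weakly in $\ldhz$ for some $w$, and $\partial_t u_n\rightharpoonup\partial_t u$ weakly in $\ldhp$. Passing to the limit in the variational identity of \eqref{eq-pf}, the viscous term $\tfrac1n\int_0^T\int_\Omega\grad u_n\cdot\grad v$ vanishes, so $u$ satisfies \eqref{eq:stefan} with $\grad w$ in place of $\grad\vphi(u)$; the initial condition $u(0)=u_0$ follows from $u_n(0)=u_0$ and convergence in $C([0,T],\hmun)$.

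The key step is to identify $w=\vphi(u)$, where the negative-space convergence does the work usually done by strong $\ldoT$ compactness of $u_n$. As $u_n(s)\in\ldo$, the pairing coincides with the $\ldo$ product, so $\int_0^T\langle u_n(s),\vphi(u_n(s))\rangle_{H^{-1},H^1_0}\ds=\int_0^T\int_\Omega u_n\vphi(u_n)\dx\ds$; since $u_n\to u$ strongly in $\ldhp$ (a consequence of convergence in $C([0,T],\hmun)$) and $\vphi(u_n)\rightharpoonup w$ weakly in $\ldhz$, the strong/weak duality gives $\int_0^T\int_\Omega u_n\vphi(u_n)\dx\ds\to\int_0^T\int_\Omega u\,w\dx\ds$. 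Minty's trick then applies: for every $g\in\ldoT$, monotonicity of $\vphi$ gives $\int_0^T\int_\Omega(\vphi(u_n)-\vphi(g))(u_n-g)\dx\ds\ge0$, and passing to the limit (using the weak convergences of $u_n$ and $\vphi(u_n)$ in $\ldoT$ and the product limit above) yields $\int_0^T\int_\Omega(w-\vphi(g))(u-g)\dx\ds\ge0$ for all $g$. Taking $g=u\mp\lambda h$ with $\lambda\downarrow0$ and $h\in\ldoT$ arbitrary forces $w=\vphi(u)$, so $u$ solves \eqref{stefan} and $\vphi(u_n)\rightharpoonup\vphi(u)$ in $\ldhz$.

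To improve this to strong convergence it suffices, in the Hilbert space $\ldhz$, to prove convergence of norms. Testing \eqref{eq-pf} with $\vphi(u_n)$ and using the chain rule for $\Phi$ gives $\int_0^T\int_\Omega|\grad\vphi(u_n)|^2\dx\ds=\int_0^T\int_\Omega f\vphi(u_n)\dx\ds-\int_\Omega\Phi(u_n(T))\dx+\int_\Omega\Phi(u_0)\dx-\tfrac1n\int_0^T\int_\Omega\vphi'(u_n)|\grad u_n|^2\dx\ds$. The last term tends to $0$ by the a priori bound, $\int_0^T\int_\Omega f\vphi(u_n)\dx\ds\to\int_0^T\int_\Omega f\vphi(u)\dx\ds$, and since $\Phi$ is convex the functional $v\mapsto\int_\Omega\Phi(v)\dx$ is weakly lower semicontinuous on $\ldo$ while $u_n(T)\rightharpoonup u(T)$ weakly in $\ldo$, so $\liminf\int_\Omega\Phi(u_n(T))\dx\ge\int_\Omega\Phi(u(T))\dx$. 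Taking $\limsup$ gives $\limsup\int_0^T\int_\Omega|\grad\vphi(u_n)|^2\dx\ds\le\int_0^T\int_\Omega f\vphi(u)\dx\ds-\int_\Omega\Phi(u(T))\dx+\int_\Omega\Phi(u_0)\dx$, and the right-hand side equals $\int_0^T\int_\Omega|\grad\vphi(u)|^2\dx\ds$ by the energy identity for $u$ (testing \eqref{eq:stefan} with $\vphi(u)$); together with weak lower semicontinuity of the norm this yields convergence of norms, hence strong convergence in $\ldhz$. I expect the main obstacle to be precisely the nonlinear (Alt--Luckhaus, \cite{alt_luckhaus}) chain rule $\int_0^T\langle\partial_t u(s),\vphi(u(s))\rangle_{H^{-1},H^1_0}\ds=\int_\Omega\Phi(u(T))\dx-\int_\Omega\Phi(u_0)\dx$ for the limit, which only enjoys $u\in L^\infty(]0,T[,\ldo)$ with $\vphi(u)\in\ldhz$ and $\partial_t u\in\ldhp$ rather than $u\in\ldhz$ as for $u_n$; establishing this identity, needed for the last energy comparison, is the delicate technical point, whereas the conceptual novelty lies in the $\hmun$/$\hunz$ duality replacing strong $\ldoT$ compactness of $u_n$.
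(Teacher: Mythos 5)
Your proposal is correct and follows essentially the same route as the paper: the same a priori estimates (testing with $u_n$ and $\vphi(u_n)$), the same central idea of trading strong $L^2(]0,T[,L^2(\O))$ compactness of $u_n$ for strong $L^2(]0,T[,H^{-1}(\O))$ compactness paired against weak $L^2(]0,T[,H^1_0(\O))$ convergence of $\vphi(u_n)$, Ascoli/Simon in $C([0,T],H^{-1}(\O))$ for the initial condition, and the same energy comparison (including the delicate Alt--Luckhaus chain rule for the limit $u$, which the paper also uses without further comment) to upgrade to strong convergence in $L^2(]0,T[,H^1_0(\O))$. The only local difference is the identification $w=\vphi(u)$: you run the classical Minty argument with arbitrary comparison functions $g$, whereas the paper's Lemma \ref{lem:mintyneg} uses the same duality limit $\int_0^T\act{u_n-u}{T_k(\vphi(u_n))}{H^{-1}}{H^1_0}\dt\to 0$ together with the inequality $(T_k(\vphi(a))-T_k(\vphi(b)))^2\le L_\vphi(T_k(\vphi(a))-T_k(\vphi(b)))(a-b)$ to get directly the strong $L^2$ convergence of the truncations $T_k(\vphi(u_n))$ --- a slightly stronger conclusion that the paper reuses verbatim in the $L^1$-data case of Section \ref{sec:irregrhs}.
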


The remaining of  Section \ref{sec:regrhs} is dedicated to the proof of Theorem \ref{thm:reg}

\subsection{Estimates on the approximate solution}\label{sec-eap}

Let $\Phi$ be a primitive of $\vphi$ and let $n>0$.
Since $u_n \in L^2(]0,T[, H^1_0 (\Omega))$ and $\partial_t u_n  \in L^2(]0,T[, H^{-1}(\Omega))$, Lemma 4.31 of \cite{edp-gh} gives $\Phi(u_n) \in C([0,T],L^1(\Omega))$ and, for $0 \le t_1 < t_2 \le T$,
\[
\int_\Omega \Phi(u_n(t_2)) \dx - \int_\Omega \Phi(u_n(t_1)) \dx=\int_{t_1}^{t_2}  \langle \partial_t u_n(s) , \vphi(u_n(s))\rangle_{H^{-1}, H^1_0}\ds.
\]
Then taking $v=\vphi(u_n)$ in \eqref{eq-pf}, we obtain an estimate on $\Phi(u_n)$ in $C([0,T],L^1(\Omega))$ and an estimate on $\vphi(u_n)$ in $ L^2(]0,T[, H^1_0 (\Omega))$, that is the existence of $C$, only depending on $f$ and $u_0$ such that, for all $t \in [0,T]$,
\be
\norm{\Phi(u_n(t))}{L^1(\Omega)} \le C,
\label{est-cl1}
\ee
and
\be
\norm{\vphi(u_n)}{L^2(]0,T[, H^1_0(\Omega))} \le C \hbox{ and }\norm{u_n}{L^2(]0,T[, H^1_0(\Omega))} \le C \sqrt{n}.
\label{est-h10}
\ee
Using \eqref{est-h10} and \eqref{eq-pf} once again yield an estimate on $\partial_t u_n$ in $L^2(]0,T[,H^{-1}(\Omega))$ that is expressed by
\be
\norm{\partial_tu_n}{L^2(]0,T[, H^{-1}(\Omega))} \le C.
\label{est-hm1}
\ee
We will now also obtain an estimate on $u_n$ in $C([0,T],L^2(\Omega))$ thanks to the fact that $f \in L^2(]0,T[ ,L^2(\Omega))$
We first recall that $u_n \in C([0,T],L^2(\Omega))$ and taking $v=u_n$ in \eqref{eq-pf} leads to (with Lemma 4.26 of \cite{edp-gh}), for all $t \in [0,T]$,
\begin{multline*}
\frac 1 2 \norm{u_n(t)}{L^2(\Omega)}^2 - \frac 1 2 \norm{u_0}{L^2(\Omega)}^2 \le \int_0^t \int_\O f(x,s) u_n(x,s) \dx\ds
\\ 
\le \int_0^t \norm{f(s)}{\ldo}\norm{u_n(s)}{\ldo}\ds
\\ \le \frac 1 2 \int_0^T \norm{f(s)}{\ldo}^2\ds +  \frac 1 2\int_0^t \norm{u_n(s)}{\ldo}^2\ds,
\end{multline*}
which leads to an estimate on $\norm{u_n(t)}{L^2(\Omega)}^2$ by the classical Gronwall technique.
Therefore, here also, we obtain, for all $t \in [0,T]$,
\be
\norm{u_n(t)}{L^2(\Omega)} \le C.
\label{est-ld}
\ee

\subsection{Passing to the limit}

Estimate \eqref{est-h10} allows us to assume that, up to a subsequence, still denoted $(u_n)_\nnn$, $\vphi(u_n) \to \zeta$ weakly in 
$L^{2}(]0,T[,H^1_0(\Omega))$.
Furthermore, since $\ldo$ is compactly embedded in $H^{-1}(\O)$, we may also assume, up to a subsequence, still denoted $(u_n)_\nnn$,
that $u_n \to u$  in $L^{2}(]0,T[,H^{-1}(\Omega))$ (thanks to \eqref{est-hm1}).
This property is given for instance by Theorem 4.42 of  \cite{edp-gh}, with $B=Y=H^{-1}(\O)$ and $X=\ldo$.
With estimate \eqref{est-hm1} and linearity of the operator $\partial_t$ we also have
$\partial_t u_n \to \partial_t u$ weakly in $L^{2}(]0,T[,H^{-1}(\Omega))$.
We also observe that, owing to \eqref{est-h10}, the inequality
\[
 \big| \frac 1 n \int^T_0  \int_\Omega  \nabla u_n(x,s) \cdot \nabla v(x,s) \dx  \ds\big| \le \frac C {\sqrt{n}}\norm{v}{L^2(]0,T[, H^1_0(\Omega))}
\]
implies that
\[
 \lim_{n\to\infty}\frac 1 n \int^T_0  \int_\Omega  \nabla u_n(x,s) \cdot \nabla v(x,s) \dx  \ds = 0.
\]

Thanks to Estimate \eqref{est-ld}, we also have $u_n \to u$  weakly in $L^{2}(]0,T[,\ldo)$ and $u \in L^\infty(]0,T[,\ldo)$.

Since $u \in L^2(]0,T[,\ldo) \subset L^2(]0,T[,H^{-1}(\Omega))$ and $\partial_t u \in L^{2}(]0,T[,H^{-1}(\Omega))$, the function $u$ belongs to $C([0,T],H^{-1}(\O)$ and even to  $C^{1/2}([0,T],H^{-1}(\O)$ (see, for instance Lemma 4.25 of \cite{edp-gh}).

It is now possible to pass to the limit in \eqref{eq-pf}, it gives, for all $v \in L^2(]0,T[, H^1_0(\Omega))$
\begin{multline}
 \int^T_0 \langle \partial_t u(s) , v(s)\rangle_{H^{-1}, H^1_0}\ds + \int^T_0  \int_\Omega  \nabla \zeta(x,s) \cdot \nabla v(x,s) \dx  \ds 
\\ \dsp \hfill = \int^T_0 (\int_\O f(s,x) v(s,x) \dx) \ds,
\label{eq-pfl}
\end{multline}
In order to prove that $u$ is a solution of \eqref{stefan} it remains to prove that $\zeta=\vphi(u)$ a.e. and that $u(0)=u_0$.
This is done in the following sections.

\subsection{Minty trick with compactness in $L^{2}(]0,T[,H^{-1}(\Omega))$}\label{sec:minty}
\begin{lemma}[Minty trick]\label{lem:mintyneg} Let $\O$ be an open bounded subset of $\R^d$ with $d\in\N^\star$ and let $T>0$.
 Let $(u_n)_{\nnnr}$ be a bounded sequence of elements of $L^{2}(]0,T[,L^r(\O))$ for a given $r$, with $r \ge \frac {2d}{d+2}$ if $d\ge 3$, $r>1$ if $d=2$ and $r=1$ if $d=1$, and $u\in L^{2}(]0,T[,L^r(\O))$ such that $u_n$ converges to $u$ in $L^{2}(]0,T[,H^{-1}(\Omega))$ as $n\to\infty$. Let $\vphi$ be a non decreasing Lipschitz continuous function, such that $(\vphi(u_n))_{\nnnr}$ is bounded in $L^{2}(]0,T[,H^1_0(\Omega))$.
 
 Then $(\vphi(u_n))_{\nnnr}$ converges to $\vphi(u)$ in $L^{2}(]0,T[,L^p(\O))$ for any $p\in [1,2d/(d-2)[$ if $d\ge 3$ and any $p\in [1,+\infty[$ if $d\le 2$, and weakly in $L^{2}(]0,T[,H^1_0(\Omega))$.
\end{lemma}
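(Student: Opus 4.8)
The plan is to run Minty's monotonicity trick, the one new ingredient being that the ``energy'' pairing $\int u_n\,\varphi(u_n)$ is passed to the limit by compensated compactness: a sequence converging strongly in $L^2(]0,T[,H^{-1}(\Omega))$, tested against one converging weakly in $L^2(]0,T[,H^1_0(\Omega))$. First I would record the embeddings that make every pairing below meaningful. On the bounded set $\Omega$ one has $H^1_0(\Omega)\hookrightarrow L^{2^*}(\Omega)$ with $2^*=2d/(d-2)$ when $d\ge3$ (any finite exponent if $d\le2$), hence by duality $L^r(\Omega)\hookrightarrow H^{-1}(\Omega)$ exactly under the stated assumption $r\ge(2^*)'=2d/(d+2)$ (resp.\ $r>1$, $r=1$). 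Up to a subsequence, the bound on $(\varphi(u_n))$ gives $\varphi(u_n)\rightharpoonup\zeta$ weakly in $L^2(]0,T[,H^1_0(\Omega))$ and, for $r>1$, the bound on $(u_n)$ together with $u_n\to u$ in $L^2(]0,T[,H^{-1}(\Omega))$ identifies the weak limit of $u_n$ in $L^2(]0,T[,L^r(\Omega))$ as $u$. The whole proof then reduces to showing $\zeta=\varphi(u)$ and strengthening the convergence.

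The key step is the convergence
\[
\int_0^T\!\!\int_\Omega u_n\,\varphi(u_n)\dx\ds=\int_0^T\langle u_n(s),\varphi(u_n(s))\rangle_{H^{-1},H^1_0}\ds\longrightarrow\int_0^T\langle u(s),\zeta(s)\rangle_{H^{-1},H^1_0}\ds=\int_0^T\!\!\int_\Omega u\,\zeta\dx\ds,
\]
where the two equalities hold because $u_n(s),u(s)\in L^r\subseteq L^{(2^*)'}$ pair with $\varphi(u_n(s)),\zeta(s)\in L^{2^*}$ by Hölder. Writing $\langle u_n,\varphi(u_n)\rangle-\langle u,\zeta\rangle=\langle u_n-u,\varphi(u_n)\rangle+\langle u,\varphi(u_n)-\zeta\rangle$ and integrating in time, the first contribution is bounded by $\norm{u_n-u}{L^2(]0,T[,H^{-1}(\Omega))}\,\norm{\varphi(u_n)}{L^2(]0,T[,H^1_0(\Omega))}\to0$, and the second tends to $0$ because $u\in L^2(]0,T[,H^{-1}(\Omega))$ is a fixed element of the dual of $L^2(]0,T[,H^1_0(\Omega))$ against which $\varphi(u_n)-\zeta\rightharpoonup0$. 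This is precisely where the compactness in the negative exponent space is used, and I expect it to be the heart of the argument.

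Then comes Minty's trick. For the nondecreasing $\varphi$ and any bounded $w\in L^2(]0,T[,L^\infty(\Omega))$, monotonicity gives $\int_0^T\int_\Omega(\varphi(u_n)-\varphi(w))(u_n-w)\ge0$; passing to the limit — the product term by the display above, the crossed terms by $\varphi(u_n)\rightharpoonup\zeta$ in $L^2(]0,T[,L^{2^*}(\Omega))$ and $u_n\rightharpoonup u$ in $L^2(]0,T[,L^r(\Omega))$, with $\varphi(w),w$ bounded — yields $\int_0^T\int_\Omega(\zeta-\varphi(w))(u-w)\ge0$. I would then take $w=T_k(u)+\lambda\psi$, with $T_k$ the truncation at height $k$ and $\psi\in L^\infty(]0,T[\times\Omega)$, let $k\to\infty$ (so that $u-T_k(u)\to0$ in $L^{(2^*)'}(\Omega)$ by dominated convergence, legitimate since $u\in L^r\subseteq L^{(2^*)'}$), then $\lambda\to0^{\pm}$, using the Lipschitz continuity of $\varphi$ to pass the remaining limits, and conclude $\int_0^T\int_\Omega(\zeta-\varphi(u))\psi=0$ for every bounded $\psi$, i.e.\ $\zeta=\varphi(u)$ a.e.

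For the last, strong statement I would invoke the elementary inequality $(\varphi(a)-\varphi(b))^2\le L(\varphi(a)-\varphi(b))(a-b)$, valid since $\varphi$ is $L$-Lipschitz and nondecreasing, to obtain
\[
\norm{\varphi(u_n)-\varphi(u)}{L^2(]0,T[,L^2(\Omega))}^2\le L\int_0^T\!\!\int_\Omega(\varphi(u_n)-\varphi(u))(u_n-u)\dx\ds.
\]
Expanding the right-hand side into four pairings and using the display of the second paragraph (now with $\zeta=\varphi(u)$), the weak convergence $\varphi(u_n)\rightharpoonup\varphi(u)$ in $L^2(]0,T[,L^{2^*}(\Omega))$, and the $H^{-1}$--$H^1_0$ pairing of $u_n-u\to0$ against the fixed $\varphi(u)\in L^2(]0,T[,H^1_0(\Omega))$, this right-hand side tends to $0$; hence $\varphi(u_n)\to\varphi(u)$ strongly in $L^2(]0,T[,L^2(\Omega))$. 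Interpolating with the uniform bound in $L^2(]0,T[,L^{2^*}(\Omega))$ then gives strong convergence in $L^2(]0,T[,L^p(\Omega))$ for every $p<2^*$ (every finite $p$ if $d\le2$), and the weak $H^1_0$ convergence to $\varphi(u)$ is already in hand; as the limit is independent of the subsequence, the whole sequence converges. Besides the compensated-compactness step, the delicate routine point will be the integrability bookkeeping for the admissible test functions $w$ at the borderline $r=(2^*)'$ and the treatment of the non-reflexive case $d=1$, $r=1$, where the weak $L^r$ compactness of $(u_n)$ must be replaced by pairing through $H^{-1}$.
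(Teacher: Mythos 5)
Your overall architecture --- passing to the limit in the energy pairing by testing strong convergence in $L^{2}(]0,T[,H^{-1}(\O))$ against weak convergence in $L^{2}(]0,T[,H^1_0(\O))$, then combining monotonicity with the Lipschitz inequality $(\vphi(a)-\vphi(b))^2\le L(\vphi(a)-\vphi(b))(a-b)$ --- is the right one, and your compensated-compactness step is exactly the paper's key mechanism. The genuine gap is in the identification $\zeta=\vphi(u)$ via the classical Minty test functions $w=T_k(u)+\lambda\psi$. After letting $n\to\infty$ you must let $k\to\infty$ in
\[
\int_0^T\!\!\int_\O \vphi\bigl(T_k(u)+\lambda\psi\bigr)\,\bigl(u-T_k(u)\bigr)\dx\dt,
\]
whose integrand is supported on $\{|u|>k\}$ and is of size $\bigl(|\vphi(0)|+L(k+|\lambda|\,\|\psi\|_\infty)\bigr)\,(|u|-k)\le C(1+|u|)\,|u|\,\characteristic_{\{|u|>k\}}$. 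Dominated convergence therefore requires $u\in L^2(Q_T)$, whereas the hypothesis only gives $u\in L^{2}(]0,T[,L^r(\O))$ with $r$ as small as $2d/(d+2)<2$; for such $u$ (take $\vphi=\mathrm{id}$ and $u$ behaving like $|x|^{-2}$ locally in $d=3$, which lies in $L^{6/5}$ but not $L^2$) this term is not even bounded in $k$. This is not borderline bookkeeping: $r<2$ is precisely the regime in which the lemma is invoked for the irregular-data section. Your final strong-convergence step is fine but presupposes $\zeta=\vphi(u)$, both to know $\vphi(u)\in L^{2}(]0,T[,H^1_0(\O))$ and to close the four-term expansion, so it cannot rescue the identification.

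The paper sidesteps the auxiliary test function entirely: it truncates $\vphi(u_n)$ rather than $u$, sets $\Theta_n=(T_k(\vphi(u_n))-T_k(\vphi(u)))(u_n-u)\ge 0$, and shows $\Vert\Theta_n\Vert_{L^1}\to 0$ using only two pairings that are unconditionally defined because $T_k\circ\vphi$ is bounded: $\int T_k(\vphi(u_n))(u_n-u)$ through the $H^{-1}$--$H^1_0$ duality (your mechanism), and $\int T_k(\vphi(u))(u_n-u)$ through the weak convergence of $u_n$ in $L^{2}(]0,T[,L^r(\O))$ against a bounded function. The Lipschitz inequality applied to $T_k\circ\vphi$ with $b=u$ itself then yields strong $L^2$ convergence of $T_k(\vphi(u_n))$ to $T_k(\vphi(u))$ in one stroke --- identification of the weak limit and strong convergence come together --- and the truncation is removed afterwards by the interpolation lemma of the appendix. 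If you replace your test-function step by this device, the rest of your argument closes.
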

\begin{remark}
 The proof of this lemma could be obtained by applying \cite[Proposition 1.4]{and2017nonlinar}. According to the objectives of this paper, we provide below a proof based on the direct use of compactness properties in negative  exponent Sobolev spaces.
\end{remark}

\begin{proof}
Let us first observe that $u_n\to u$ weakly in $L^{2}(]0,T[,L^r(\O))$, since there exists subsequences of $(u_n)_{\nnnr}$ which converge weakly in  $L^{2}(]0,T[,L^r(\O))$ but the limits of these subsequences are necessarily equal to $u$ (since
$u_n$ converges to $u$ in $L^{2}(]0,T[,H^{-1}(\Omega))$), then the whole sequence $(u_n)_\nnn$ converges to $u$ weakly in $L^{2}(]0,T[,L^r(\O))$.

\medskip

For any $k>0$, we denote by $T_k$ the truncation function defined by
\begin{equation}\label{eq:deftrunc}
\forall s\in\R,\ T_k(s) = \min(k,|s|){\rm sign}(s).
\end{equation}
We then remark that the sequence $(T_k(\vphi(u_n)))_{\nnnr}$ is also bounded in $L^{2}(]0,T[,H^1_0(\Omega))$ (again applying Lemma 4.31 in \cite{edp-gh}).
We denote by $(u_n)_{\nnnr}$ a subsequence of $(u_n)_{\nnnr}$ such that $T_k(\vphi(u_n)) \to \zeta_k$ weakly in 
$L^{2}(]0,T[,H^1_0(\Omega))$, where $\zeta_k\in L^{2}(]0,T[,H^1_0(\Omega))$. Since $u_n \to u$  in $L^{2}(]0,T[,H^{-1}(\Omega))$, 
\begin{multline}
\int_0^T \int_\O T_k(\vphi(u_n(x,t))) (u_n(x,t)-u(x,t)) \dx\dt \\ = \int_0^T \act{u_n(t)-u(t)}{T_k(\vphi(u_n(t)))}{H^{-1}}{H^1_0}\dt\\
\to \int_0^T \act{0}{\zeta_k(t)}{H^{-1}}{H^1_0}\dt
=0
\textrm{~as~} \nti.\label{eq:cvphiun}
\end{multline}
Let $\Theta_n$ be  the nonnegative function defined, for a.e. $(x,t)\in \O\times]0,T[$, by
\[
 \Theta_n(x,t) = \big(T_k(\vphi(u_n(x,t)))-T_k(\vphi(u(x,t)))\big) \big(u_n(x,t)-u(x,t)\big).
\]
Writing, owing to the nonnegativity of $\Theta_n$,
\begin{multline*}
\Vert \Theta_n\Vert_1 =\int_0^T \int_\O \Theta_n(x,t)  \dx\dt=
\int_0^T \int_\O T_k(\vphi(u_n(x,t)))(u_n(x,t) - u(x,t)) \dx\dt \\ -  \int_0^T \int_\O T_k(\vphi(u(x,t)))(u_n(x,t) - u(x,t))\dx\dt,
\end{multline*}
we obtain, using \eqref{eq:cvphiun} for the first term and the weak convergence of $u_n$ to $u$ in $L^{2}(]0,T[,L^r(\O))$ for the second term, that
\[
 \lim_{n\to \infty}\Vert \Theta_n\Vert_1 = 0.
\]
Since we have
\begin{multline*}
 \int_0^T\int_\O \big(T_k(\vphi(u_n))-T_k(\vphi(u))\big)^2\dx \dt \\
 \le L_\vphi \int_0^T\int_\O \big(T_k(\vphi(u_n))-T_k(\vphi(u))\big)\big(u_n-u\big)\dx \dt = L_\vphi \Vert \Theta_n\Vert_1,
\end{multline*}
where $L_\vphi$ is given by the Lipschitz continuity of $\vphi$, 
we obtain that $T_k(\vphi(u_n))$ converges to $T_k(\vphi(u))$ in $L^2(]0,T[,L^2(\O))$. By uniqueness of the limit, this convergence holds for the whole sequence 
$(u_n)_{\nnnr}$.

\medskip

We therefore get that, for all $k>0$, $T_k(\vphi(u_n))$ converges to $T_k(\vphi(u))$ in $L^2(]0,T[,L^2(\O))$. Applying Lemma \ref{lem:cvtkstrong} in the appendix with $p = \min(r,2)$, this proves that
$\vphi(u_n)$ converges to $\vphi(u)$ in  $L^1(]0,T[,L^1(\Omega))$. But, since the sequence $(\vphi(u_n))_{\nnnr}$ is bounded in $L^{2}(]0,T[,H^1_0(\Omega))$ (and then admits weakly converging subsequences in $L^{2}(]0,T[,H^1_0(\Omega))$ whose the limit is necessarily, at least in the distribution sense, the same  as the limit in $L^1(]0,T[,L^1(\Omega))$), this implies that $\vphi(u) \in L^{2}(]0,T[,H^1_0(\Omega))$ and $\vphi(u_n) \to \vphi(u)$ weakly in 
$L^{2}(]0,T[,H^1_0(\Omega))$.
By Sobolev inequality, we therefore have $\vphi(u_n)$ and $\vphi(u)$ bounded in $L^{2}(]0,T[,L^p(\O))$ for  $p=2d/(d-2)$ if $d\ge 3$ and for any $p\in [1,+\infty[$ if $d\le 2$. By interpolation, this implies that $\vphi(u_n)\to\vphi(u)$  in $L^{2}(]0,T[,L^p(\O))$ for any $p\in [1,2d/(d-2)[$ if $d\ge 3$ and any $p\in [1,+\infty[$ if $d\le 2$.
\end{proof}

\subsection{Initial condition and conclusion of the convergence proof}\label{sec:inihmun}

The above results prove that $u$ is solution of  \eqref{eq:stefan}, that is $u$ satisfies
 \begin{equation*}
	\left\{\begin{array}{l}
	u \in L^{\infty}(]0,T[,L^2(\Omega)), \partial_t u  \in L^2(]0,T[, H^{-1}(\Omega)), u \in C([0,T], H^{-1}(\Omega))),\\
	\vphi(u) \in L^{2}(]0,T[,H^1_0(\Omega)) \\
	\dsp \int_0^T \left\langle \partial_t u(s) , v(s)\right\rangle_{H^{-1},H^{1}_0 } \dt + \int_0^T \int_\Omega \grad \vphi(u(x,s))  \cdot \nabla v(x,s) \dx \dt  \\
	 \hfill =\dsp \int_0^T \int_\Omega   f(x,s) v(x,s)  \dx \dt , \qquad \forall v \in L^2(]0,T[, H^{1}_0(\Omega)).
%	u(\cdot,0 ) = u_0. 
	\end{array}\right. \label{stefaninc}\
\end{equation*}
It remains to prove \eqref{eq:stefanz}, that is $u(0)=u_0$. For this purpose, we will prove below that the sequence $(u_n)_\nnnr$ is relatively compact in $C([0,T],H^{-1}(\O))$.
Indeed, if this compactness is proven, it exists $w \in C([0,T], H^{-1}(\O))$ and a subsequence, still denoted as 
$(u_n)_\nnnr$, such that $u_n(t) \to w(t)$ in $H^{-1}(\O)$ uniformly with respect to $t \in [0,T]$ (and then also in $L^2(]0,T[,H^{-1}(\O))$).
In particular $w(0)=\lim_{\nti} u_n(0)=u_0$.
But, since we already know that 
$u_n \to u$ in $L^2(]0,T[,H^{-1}(\O))$, by uniqueness of the limit,  $u=w$ a.e. on $]0,T[$ and $u(t)=w(t)$ for all $t \in [0,T]$ since $u$ and $w$ are continuous on $[0,T]$.
Finally we obtain $u(0)=w(0)=u_0$.

It remains to show that the sequence $(u_n)_\nnnr$ is relatively compact
in \\ $C([0,T],H^{-1}(\O))$.
Using Ascoli's Theorem, it is enough to prove:
\benum
\item For all $t \in [0,T]$, $(u_n(t))_\nnnr$ is relatively compact in $H^{-1}(\O)$.
\item $\norm{u_n(t)-u_n(s)}{H^{-1}} \to 0$, as $s \to t$, uniformly with respect to $\nnnr$ (and for all $t \in [0,T]$).
\eenum
The second item is a consequence of $\partial_t u_n \in L^1([0,T], H^{-1}(\O))$ since Lemma 4.25 of \cite{edp-gh} gives
for all  $t_1,t_2 \in [0,T]$, $t_1 > t_2$ and all $\nnnr$,
\[
u_n(t_1)-u_n(t_2)=\int_{t_2}^{t_1} \partial_t u_n(s)\ds,
\]
and then
\begin{multline*}
\norm{u_n(t_1)-u_n(t_2)}{H^{-1}} \le \int_{t_2}^{t_1} \norm{\partial_t u_n(s)}{H^{-1}} \ds
\\
\le \big( \int_0^T \norm{\partial_t u_n(s)}{H^{-1}}^2\ds\big)^{\frac 1 2}
\sqrt{t_1-t_2}
\\
 \le \norm{\partial_t u_n}{L^2(]0,T[,H^{-1})} \sqrt{t_1-t_2}.
\end{multline*}
Since the sequence $(\partial_t u_n)_\nnnr$ is bounded in $L^2(]0,T[,H^{-1}(\O))$, one deduces\\
$\norm{u_n(t)-u_n(s)}{H^{-1}} \to 0$, as $s \to t$, uniformly with respect to $\nnnr$ (and for all $t \in [0,T]$).

In order to prove the first item, one uses Estimate \eqref{est-ld}. 
It gives that 
the sequence $(u_n(t))_\nnnr$ is bounded in $L^2(\O)$ for all $t \in [0,T]$ and then is relatively compact in $H^{-1}(\O)$ for all $t \in [0,T]$.

It is then possible to apply Ascoli's Theorem and obtain as it is said before 
$u(0)=u_0$.
This proves that $u$ is solution of \eqref{stefan}.

\medskip

Finally, we will prove now that $\vphi(u_n) \to \vphi(u)$  in $L^2(]0,T[,H^1_0(\O))$ as $\nti$.
Let $\Phi$ be the primitive of $\vphi$ defined by $\Phi(s) = \int_0^s \vphi(t)\dt$.
The convexity of $\Phi$, the fact that $\vphi(u) \in L^2(]0,T[, L^2(\O))$ and $u_n \to u$ weakly in $L^2(]0,T[, L^2(\O))$ gives
\[
\int_0^T \int_\O \Phi(u) \dx \dt \le \liminf_\nti \int_0^T \int_\O \Phi(u_n) \dx \dt.
\]
Since $\vphi(u_n) \to \vphi(u)$ weakly in $L^2(]0,T[,H^1_0(\O))$, as $\nti$, one has also
\[
\int_0^T \int_\O \nabla u \cdot \nabla u \dx \dt \le \liminf_\nti \int_0^T \int_\O \nabla u_n \cdot \nabla u_n \dx \dt.
\]
Taking $v = \vphi(u_n)$ in  \eqref{eq-pf} and  $v = \vphi(u)$ in \eqref{stefan} considered for any $t\in[0,T]$, one proves %lemma 4.31 de edp-gh
\begin{multline*}
 \limsup_{\nti} (\int_0^T \int_\O \Phi(u_n) \dx \dt +   \int_0^T \int_\O \nabla u_n \cdot \nabla u_n \dx \dt)
 \\
 \le\int_0^T \int_\O \Phi(u) \dx \dt +   \int_0^T \int_\O \nabla u \cdot \nabla u \dx \dt.
\end{multline*}
Then, we get
\[
 \lim_{\nti} \norm{\vphi(u_n)}{L^2(]0,T[,H^1_0(\O))} =  \norm{\vphi(u)}{L^2(]0,T[,H^1_0(\O))}
\]
and this gives the convergence, as $\nti$, of $\vphi(u_n)$ to $\vphi(u)$  in $L^2(]0,T[,H^1_0(\O))$.

\section{The case of irregular data}\label{sec:irregrhs}

\subsection{Weak formulation and convergence result}

We now consider the equation 
$\partial_t u  - \Delta(\vphi(u))=f$,  with a homogeneous Dirichlet boundary condition on $\vphi(u)$, $f\in L^1(]0,T[,L^1(\O))$ and  initial datum $u_0\in L^1(\O)$. 
For any $p \ge 1$, we let $p' = \frac p {p-1}$ if $p>1$, $p'=\infty$ if $p=1$. 
We define the space $W^{-1,1}_\star(\Omega)$ as the dual space of $W^{1,\infty}_0(\O)$ (see \cite[Remark A.3]{gal2012conv}). We recall that, for $1 \le p<+\infty$, the space $L^p(Q_T)$ can be identified with the space  $L^p(]0,T[,L^p(\O))$, which is not true for $p=+\infty$
(see \cite{droniou-ivv}, Section 1.8.1).

Since $\ldo$ is identified with $\ldo'$, one has for any $p\in ]1,2]$, 
\[
 W^{1,\infty}_0(\O)\subset W^{1,p'}_0(\Omega)\subset \ldo = \ldo' \subset W^{-1,p}(\Omega)\subset W^{-1,1}_\star(\Omega).
\]
We again use the truncation function $T_k:\R\to\R$ defined by \eqref{eq:deftrunc}, that is
\[
 \forall k>0,\ \forall s\in\R,\ T_k(s) = \min(k,|s|){\rm sign}(s).
\]

The weak sense that we consider is given by

\begin{subequations}
\begin{align}
	&\left\{\begin{array}{l}
	u \in L^{\infty}(]0,T[,L^{1}(\Omega)), \, u \in C([0,T], W^{-1,1}_\star(\Omega))), 
	\\
	\hbox{ for any }r <\frac d {d-1},\ u \in L^{2}(]0,T[,L^{r}(\Omega)),\\
	\partial_t u  \in L^1(]0,T[, W^{-1,1}_\star(\Omega)),
	\\
	 \hbox{ for any }p<\frac {d+2}{d+1},
	\vphi(u) \in L^p(]0,T[,W^{1,p}_0(\Omega)),\\
	\hbox{ for any }k>0,\ T_k(\vphi(u)) \in L^2(]0,T[,H^{1}_0(\Omega)),\\
	\dsp \int_0^T \left\langle \partial_t u(s) ,\psi(s)\right\rangle_{W^{-1,1}_\star,W^{1,\infty}_0} \ds + \int_0^T \int_\Omega \grad \vphi(u(x,s))  \cdot \nabla \psi(x,s) \dx \ds  \\
	 \hfill =\dsp \int_0^T \int_\Omega   f(x,s) \psi(s)  \dx \ds , \qquad \forall \psi \in L^{\infty}(]0,T[,W^{1,\infty}_0(\Omega)), 
	\end{array}\right. \label{eq:pfweakirr}\\
&	u(\cdot,0 )= u_0.\label{eq:pfweakirrz}
	\end{align}
\label{eq-pf-weak-irr}
\end{subequations}
 Notice that we include in the above weak sense the regularity property $T_k(\vphi(u))\in L^2(]0,T[,H^{1}_0(\Omega))$. The question of the uniqueness of $u$ solution to \eqref{eq-pf-weak-irr} remains open (see \cite{ben1997unicite} for an example of a uniqueness property in the elliptic case, involving the regularity of the truncations).
 
In order to prove the existence of a solution to Problem \eqref{eq-pf-weak-irr}, we complete the hypotheses on $\vphi$ by additional ones which are not needed in  Section \eqref{sec:regrhs}:
 \begin{subequations}
\begin{align}
 & \vphi : \mathbb{R} \to \mathbb{R} \mbox{ is non-decreasing and $L_\vphi$-Lipschitz continuous,}\label{hyp:lipphi}\\
 & \vphi(0) = 0\label{hyp:phizero}\mbox{ and }\\
&\mbox{ there exist $Z_0,Z_1>0$ such that $|\vphi(s)| \ge Z_1|s| - Z_0$ for any $s \in \mathbb{R}$.}\label{hyp:surlin}
\end{align}
\label{eq:hypzeta}
\end{subequations}
Hypothesis \eqref{hyp:lipphi} is already used in Section 2. Hypothesis \eqref{hyp:phizero}, which does not reduce the generality, is only done for simplifying some computations in the proofs below. Hypothesis \eqref{hyp:surlin} is used for getting some estimates allowing the convergence analysis done in this irregular data case.

In this section \ref{sec:irregrhs}, we consider a sequence $(f^{(n)},u_0^{(n)})_{\nnn}$ such that:

 $f^{(n)}\in L^2(]0,T[,\ldo)$, $u_0^{(n)}\in L^2(\O)$, $f^{(n)}\to f$ in $L^1(\O \times ]0,T[)$ and $u_0^{(n)}\to u_0$ in $L^1(\O)$ as $\nti$.

We then consider, for all $\nnn$, a function $u^{(n)}\in L^2(]0,T[,\ldo)$ such that \eqref{stefan} holds with $u_0 = u_0^{(n)}$ and $f = f^{(n)}$, which is expressed by
\begin{equation}
	\left\{\begin{array}{l}
	u^{(n)} \in L^{\infty}(]0,T[,L^2(\Omega)), \partial_t u^{(n)}  \in L^2(]0,T[, H^{-1}(\Omega)), u^{(n)} \in C([0,T], H^{-1}(\Omega)),\\
	\vphi(u^{(n)}) \in L^{2}(]0,T[,H^1_0(\Omega)), \\
	\dsp \int_0^T \left\langle \partial_t u^{(n)}(s) , v(s)\right\rangle_{H^{-1},H^{1}_0 } \ds + \int_0^T \int_\Omega \grad \vphi(u^{(n)}(x,s))  \cdot \nabla v(x,s) \dx \ds  \\
	 \hfill =\dsp \int_0^T \int_\Omega   f^{(n)}(x,s) v(x,s)  \dx \ds , \qquad \forall v \in L^2(]0,T[, H^{1}_0(\Omega)), \\
	u^{(n)}(\cdot,0 ) = u_0^{(n)}.
	\end{array}\right. \label{stefann}
\end{equation}
The existence of such a function $u^{(n)}$ is proved in Section \eqref{sec:regrhs}.
We then have the following convergence result for $d<4$ (this limitation  is due to the fact that we need the compactness of $L^r(\O)$ in $H^{-1}(\O)$, which  holds for $r>\frac {2d}{d+2}$, see Lemma \ref{lem:estimwmun}, although the approximate solution is bounded for $r< \frac d {d-1}$, as stated in Lemma  \ref{lem:estimhun} and $\frac {2d}{d+2} < \frac d {d-1}$ only for $d<4$).

\begin{theorem}[Convergence of the regularisation method]\label{thm:irreg}
Let $\O$ be an open bounded subset of $\R^d$, with $d=2$ or $3$ and let $T>0$. Let $\vphi$ be given such that \eqref{eq:hypzeta} holds. Let  $f\in L^1(]0,T[,L^1(\O))$ and  $u_0\in L^1(\O)$ be given, and let $(f^{(n)},u_0^{(n)})_{\nnn}$ be such that $f^{(n)}\in L^2(]0,T[,\ldo)$, $u_0^{(n)}\in L^2(\O)$ and $f^{(n)}\to f$ in $L^1(\O \times ]0,T[)$ and $u_0^{(n)}\to u_0$ in $L^1(\O)$ as $\nti$. 
 Let $(u^{(n)})_\nnn$ be a solution to \eqref{stefann} for all $\nnn$. 
 
 Then there exists $u$, solution of \eqref{eq-pf-weak-irr}, such that, as $\nti$ up to some subsequence,
 \begin{itemize}
  \item $u^{(n)}$ converges to $u$ in $C([0,T],W^{-1,1}_\star(\O))$, $L^2(]0,T[,H^{-1}(\O))$ and weakly in $L^2(]0,T[,L^r(\O))$ for any $r\in [1,\frac d {d-1}[$,
  \item $\vphi(u^{(n)})$ converges to $\vphi(u)$ in $L^p(]0,T[,L^p(\O))$ and weakly in $L^p(]0,T[,W^{1,p}_0(\O))$ for any $p\in [1,\frac {d+2} {d+1}[$,
  \item for all $k>0$, $T_k(\vphi(u^{(n)}))$ converges to $T_k(\vphi(u))$ in $L^2(]0,T[,L^2(\O))$.
 \end{itemize}

\end{theorem}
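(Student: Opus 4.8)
The plan is to combine uniform a priori estimates on the approximate solutions $u^{(n)}$ with the negative-Sobolev compactness and Minty argument of Lemma \ref{lem:mintyneg}, following the architecture of the regular case but working throughout with the weaker space $W^{-1,1}_\star(\O)$ and with truncations of $\vphi$. First I would collect the estimates provided by Lemmas \ref{lem:estimhun} and \ref{lem:estimwmun}: a bound on $u^{(n)}$ in $L^\infty(]0,T[,L^1(\O))$ and in $L^2(]0,T[,L^r(\O))$ for every $r<\frac d{d-1}$, a bound on $\vphi(u^{(n)})$ in $L^p(]0,T[,W^{1,p}_0(\O))$ for every $p<\frac{d+2}{d+1}$, a bound on $T_k(\vphi(u^{(n)}))$ in $L^2(]0,T[,H^1_0(\O))$ for each $k>0$, and a bound on $\partial_t u^{(n)}$ in $L^1(]0,T[,W^{-1,1}_\star(\O))$ read off from the equation $\partial_t u^{(n)}=\Delta\vphi(u^{(n)})+f^{(n)}$. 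Here the superlinearity hypothesis \eqref{hyp:surlin} is precisely what converts the natural bound on $\vphi(u^{(n)})$ into the stated Lebesgue bounds on $u^{(n)}$ itself.

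Second, since $d\in\{2,3\}$, one may fix $r$ with $\frac{2d}{d+2}<r<\frac d{d-1}$, so that $L^r(\O)$ embeds compactly into $H^{-1}(\O)$. An Aubin--Lions--Simon argument (bounded in $L^2(]0,T[,L^r(\O))$, time derivative bounded in $L^1(]0,T[,W^{-1,1}_\star(\O))$, with $L^r\hookrightarrow\hookrightarrow H^{-1}\hookrightarrow W^{-1,1}_\star$) then yields, up to a subsequence, $u^{(n)}\to u$ in $L^2(]0,T[,H^{-1}(\O))$; the $L^2(]0,T[,L^r(\O))$ bound upgrades this to weak convergence in $L^2(]0,T[,L^r(\O))$ for every $r<\frac d{d-1}$. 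To reach convergence in $C([0,T],W^{-1,1}_\star(\O))$ I would invoke Ascoli's theorem as in the regular case: equicontinuity in time follows from $\norm{u^{(n)}(t_1)-u^{(n)}(t_2)}{W^{-1,1}_\star}\le\int_{t_2}^{t_1}\norm{\partial_t u^{(n)}(s)}{W^{-1,1}_\star}\ds$, where the $\Delta\vphi(u^{(n)})$ contribution is controlled in $L^p(]0,T[,W^{-1,p}(\O))$ with $p>1$ (hence Hölder in time) and the $f^{(n)}$ contribution is uniformly small on small time intervals because $f^{(n)}\to f$ in $L^1(\O\times]0,T[)$ is equi-integrable; pointwise relative compactness of $(u^{(n)}(t))_\nnn$ in $W^{-1,1}_\star(\O)$ comes from the $L^\infty(]0,T[,L^1(\O))$ bound together with the compact embedding of $L^1(\O)$ into $W^{-1,1}_\star(\O)$.

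Third, I would identify the nonlinear limit through truncations. For each fixed $k>0$ the map $T_k\circ\vphi$ is nondecreasing and Lipschitz, and $T_k(\vphi(u^{(n)}))$ is bounded in $L^2(]0,T[,H^1_0(\O))$; applying Lemma \ref{lem:mintyneg} with $\vphi$ replaced by $T_k\circ\vphi$ gives $T_k(\vphi(u^{(n)}))\to T_k(\vphi(u))$ in $L^2(]0,T[,L^2(\O))$ and weakly in $L^2(]0,T[,H^1_0(\O))$, which is already the third conclusion. Letting $k\to\infty$ and using the uniform estimates to control the tails on $\{|\vphi(u^{(n)})|>k\}$ (again via \eqref{hyp:surlin}), Lemma \ref{lem:cvtkstrong} promotes this to $\vphi(u^{(n)})\to\vphi(u)$ in $L^p(]0,T[,L^p(\O))$; the $L^p(]0,T[,W^{1,p}_0(\O))$ bound then forces $\vphi(u)\in L^p(]0,T[,W^{1,p}_0(\O))$ and weak convergence of $\vphi(u^{(n)})$ there, the weak limit being pinned down by the strong $L^p$ convergence.

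Finally, I would pass to the limit in \eqref{stefann} tested against $\psi\in L^\infty(]0,T[,W^{1,\infty}_0(\O))\subset L^2(]0,T[,H^1_0(\O))$: after integrating the time term by parts to move $\partial_t$ onto a smooth-in-time $\psi$, the convergence of $u^{(n)}$ handles the first term, the weak $L^p(]0,T[,W^{1,p}_0(\O))$ convergence of $\vphi(u^{(n)})$ together with $\nabla\psi\in L^\infty$ handles the diffusion term, and $f^{(n)}\to f$ in $L^1$ with $\psi$ bounded handles the right-hand side; lower semicontinuity of the norm gives $\partial_t u\in L^1(]0,T[,W^{-1,1}_\star(\O))$, so $u$ solves \eqref{eq:pfweakirr}. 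The initial condition \eqref{eq:pfweakirrz} then follows exactly as in the regular case: the $C([0,T],W^{-1,1}_\star(\O))$ convergence gives $u(0)=\lim_\nti u^{(n)}(0)=\lim_\nti u_0^{(n)}=u_0$ in $W^{-1,1}_\star(\O)$. I expect the principal obstacle to be the time compactness in the second step: with only an $L^1$-in-time bound on $\partial_t u^{(n)}$ one cannot obtain equicontinuity from a crude Hölder estimate, and the argument genuinely relies on splitting the time derivative and exploiting the equi-integrability of $(f^{(n)})_\nnn$ --- this is precisely the point where the present approach replaces the extra time regularisation used in \cite{igbida2010renorm}.
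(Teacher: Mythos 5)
Your proposal is correct and follows essentially the same architecture as the paper: the same a priori estimates (Lemmas \ref{lem:estimhun}, \ref{lem:estimlp}, \ref{lem:estimtk}), the same Aubin--Lions--Simon compactness with $L^r(\O)\hookrightarrow\hookrightarrow H^{-1}(\O)\hookrightarrow W^{-1,1}_\star(\O)$ for $\frac{2d}{d+2}<r<\frac d{d-1}$, the same identification of the nonlinearity by applying Lemma \ref{lem:mintyneg} to $T_k\circ\vphi$ followed by Lemma \ref{lem:cvtkstrong}, and the same Ascoli argument for the initial condition. The one genuine point of divergence is the equicontinuity step in the Ascoli argument, and there your version is the more careful one. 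The paper estimates $\norm{u^{(n)}(t_1)-u^{(n)}(t_2)}{W^{-1,1}_\star}\le\int_{t_2}^{t_1}\norm{\partial_t u^{(n)}(s)}{W^{-1,1}_\star}\ds$ and then applies a Cauchy--Schwarz bound in time, ending with $\norm{\partial_t u^{(n)}}{L^1(]0,T[,W^{-1,1}_\star)}\sqrt{t_1-t_2}$; but the intermediate quantity is the $L^2$-in-time norm of $\partial_t u^{(n)}$, which is neither controlled by the $L^1$-in-time norm nor available from the estimates of Lemma \ref{lem:estimwmun} (only an $L^1$-in-time bound holds in the irregular case, in contrast with \eqref{est-hm1} in Section \ref{sec:regrhs} from which this display was evidently adapted). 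Your fix --- splitting $\partial_t u^{(n)}=\Delta\vphi(u^{(n)})+f^{(n)}$, using the $L^p(]0,T[,W^{1,p}_0(\O))$ bound with $p>1$ and H\"older in time for the diffusion part, and the equi-integrability of $(f^{(n)})_\nnn$ (which follows from its $L^1(Q_T)$ convergence) for the source part --- is exactly what is needed to obtain the uniform-in-$n$ modulus of continuity, and it is indeed the point where the method replaces the extra time regularisation of \cite{igbida2010renorm}. Two very minor remarks: the tail control needed to pass from the truncations to $\vphi(u^{(n)})$ itself in Lemma \ref{lem:cvtkstrong} only requires an $L^p$ bound with $p>1$ on $\vphi(u^{(n)})$ (available from Lemma \ref{lem:estimlp} or from the Lipschitz bound and \eqref{estiml2}), not \eqref{hyp:surlin}, which serves in the opposite direction (bounding $u^{(n)}$ by $\vphi(u^{(n)})$); and for the limit in the time term the paper avoids your integration by parts and density step by passing to the limit directly in the weak convergence of $\partial_t u^{(n)}$ in $L^1(]0,T[,W^{-1,1}_\star(\O))$ against $\psi\in L^\infty(]0,T[,W^{1,\infty}_0(\O))$, which is slightly cleaner. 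Neither remark affects the validity of your argument.
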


The remaining of this section is devoted to the proof of Theorem \ref{thm:irreg}.

\subsection{Estimates on the solution to the regularised problem}

 Since all the estimates of Section \eqref{sec:regrhs} are functions of the $L^2$-norm of the data $f$ and $u_0$, specific estimates with respect to the $L^1$-norm of the data $f$ and $u_0$ have to be given. 

In the whole section, we denote by $Q_T = \Omega\times]0,T[$.

We denote by $\ctel{cteun}$ a bound of $\Vert f^{(n)}\Vert_1$ and $\Vert u_0^{(n)}\Vert_1$ for all $\nnn$.

Let $\psi~:~\R\to (-1,1)$, and $\beta~:~\R\to \R$ be defined by
\begin{equation} \label{eq:def:psi }
  \begin{array}{ll} \displaystyle
   \dsp \forall s\in\R,\ \psi (s)= \frac {\ln(1+|s|)} {1+\ln(1+|s|)}{\rm sign}(s),\\
	\dsp \forall s\in\R,\ 	\beta(s) = \int_0^s \sqrt{\psi'(t)}\dt,
  \end{array}
\end{equation}
where ${\rm sign}(s)=1$ if $s\ge 0$ and $-1$ if $s<0$. Note that the reciprocal of $\psi$ is used in \cite{eym2021conv} for the study of the convergence of a regularised finite element scheme to the solution of an elliptic problem with $L^1$ data.
 The inequality 
$\frac 1 {\psi'(s)} \le \frac{4(1+s)^{1+\tau}} {\tau^2}$, for all $s\ge 0$ and $\tau\in ]0,2[$, implies that
\begin{equation}\label{eq:propbeta}
 \forall q\in ]0,\frac 1 2[,\ \forall s\in\R,\  (1-2q) ((1+|s|)^q - 1) \le |\beta(s)|.
\end{equation}

Let us now give estimates enabling some compactness on the sequence $(u^{(n)})_{\nnn}$.

\begin{lemma}\label{lem:estimhun} Let $\O$ be an open bounded subset of $\R^d$, with $d=2$ or $3$ and let $T>0$. Let $\vphi$ be given such that \eqref{eq:hypzeta} holds. Let  $g \in L^2(]0,T[ , L^2(\Omega))$, $v_0 \in L^2(\O)$ and 
$v$ be a solution to Problem \eqref{stefan} with $f = g$ and $u_0 = v_0$.  Let $\psi$  and $\beta$ be defined by \eqref{eq:def:psi }. Then
there exists $\ctel{ctedeux}$, only depending on $\Omega$, $T$, $Z_0$ and $Z_1$ and increasingly depending on $\Vert g\Vert_1 +\Vert v_0\Vert_1$ such that the following inequalities hold :
 \begin{equation}\label{estiml1}
  \Vert \vphi(v)\Vert_{L^\infty(]0,T[,L^1(\O))} \le \cter{ctedeux} 
  \hbox{ and }\Vert v\Vert_{L^\infty(]0,T[,L^1(\O))}\le \cter{ctedeux},
  \end{equation}
 \begin{equation}\label{estimhunbeta}
  \Vert \nabla \beta(\vphi(v))\Vert_{L^2(]0,T[,\ldo)} \le \cter{ctedeux},
  \end{equation}
  there exists a function $\ctel{ctedeuxr}(r)$  only depending on $d$, $\Omega$, $T$, $Z_0$ and $Z_1$ and increasingly depending on $\Vert g\Vert_1 +\Vert v_0\Vert_1$ such that
 \begin{equation}\label{estiml2}
  \hbox{for any }r<\frac d {d-1},\ \norm{v}{L^2(]0,T[,L^{r}(\O))}\le \cter{ctedeuxr}(r),
  \end{equation}
  and
  \begin{equation}\label{estiml2b}
  \norm{v}{L^2(]0,T[,H^{-1}(\O))}\le  \cter{ctedeux}.
  \end{equation}

\end{lemma}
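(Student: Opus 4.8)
The plan is to derive all four estimates from a single energy inequality produced by one well-chosen test function in the weak formulation \eqref{stefan}. Since only $\vphi(v)$ is known to lie in $L^2(]0,T[,\ldo)$ as an $H^1_0$-valued map, the test function must be a bounded Lipschitz function of $\vphi(v)$; I would take $\psi(\vphi(v))$, which belongs to $L^2(]0,T[,H^1_0(\O))$ by the chain rule (Lemma 4.31 of \cite{edp-gh}), because $\psi$ is Lipschitz with $\psi(0)=0$. Testing \eqref{stefan} on $(0,t)$ with this function, the time term becomes $\int_\O S(v(t))\dx-\int_\O S(v_0)\dx$, where $S'=\psi\circ\vphi$, again by the chain-rule lemma; the diffusion term is exactly $\int_0^t\int_\O\psi'(\vphi(v))|\grad\vphi(v)|^2\dx\ds=\int_0^t\int_\O|\grad\beta(\vphi(v))|^2\dx\ds$ since $\beta'=\sqrt{\psi'}$; and the right-hand side is bounded by $\Vert g\Vert_1$ because $|\psi|\le 1$. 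Noting $S\ge 0$ and $S(v_0)\le|v_0|$, this yields the master inequality $\int_\O S(v(t))\dx+\Vert\grad\beta(\vphi(v))\Vert_{L^2(]0,t[,\ldo)}^2\le\Vert v_0\Vert_1+\Vert g\Vert_1$, which already gives \eqref{estimhunbeta}.

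From here, \eqref{estiml1} follows by turning $\int_\O S(v(t))\dx\le C$ into an $L^1$ bound. Using \eqref{hyp:surlin}, $\vphi(\tau)\to+\infty$, so $\psi(\vphi(\tau))\to 1$ with a threshold controlled only by $Z_0$ and $Z_1$; hence there is $C_0=C_0(Z_0,Z_1)$ with $S(w)\ge\frac12|w|-C_0$, giving $\Vert v(t)\Vert_{L^1(\O)}\le 2\int_\O S(v(t))\dx+2C_0|\O|\le C$ uniformly in $t$, and then $\Vert\vphi(v(t))\Vert_{L^1(\O)}\le L_\vphi\Vert v(t)\Vert_{L^1(\O)}\le C$ by Lipschitz continuity. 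This is exactly \eqref{estiml1}.

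The integrability estimate \eqref{estiml2} is the technical core. The Sobolev embedding $H^1_0(\O)\hookrightarrow L^{2^\star}(\O)$, with $2^\star=2d/(d-2)$ (any finite exponent if $d=2$), together with \eqref{estimhunbeta} and Poincaré, gives a uniform bound for $\beta(\vphi(v))$ in $L^2(]0,T[,L^{2^\star}(\O))$. Property \eqref{eq:propbeta} then controls $(1+|\vphi(v)|)^q$ by $|\beta(\vphi(v))|$ for $q\in]0,1/2[$, and \eqref{hyp:surlin} transfers this to $v$, so that $(1+|v|)\in L^{2q}(]0,T[,L^{2^\star q}(\O))$. Interpolating this against the $L^\infty(]0,T[,L^1(\O))$ bound of \eqref{estiml1}, with interpolation parameter $\lambda=q$ chosen so that the time exponent is exactly $2$, produces $v\in L^2(]0,T[,L^{b}(\O))$ with $1/b=1-q+1/2^\star$. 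Letting $q\uparrow 1/2$ drives $b\uparrow d/(d-1)$, so every $r<d/(d-1)$ is reached, the constant degenerating as $r\to d/(d-1)$ through the factor $(1-2q)^{-1}$ in \eqref{eq:propbeta}; this is \eqref{estiml2}. Finally, \eqref{estiml2b} follows because for $d=2,3$ one has $2d/(d+2)<d/(d-1)$, so one may fix $r$ with $2d/(d+2)\le r<d/(d-1)$; the embedding $L^r(\O)\hookrightarrow H^{-1}(\O)$ then yields $\Vert v\Vert_{L^2(]0,T[,H^{-1}(\O))}\le C\Vert v\Vert_{L^2(]0,T[,L^r(\O))}\le C$.

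The main obstacle is the bookkeeping in the last paragraph: one must verify that the parabolic interpolation can be tuned to land exactly on time-exponent $2$ while the space-exponent sweeps up to $d/(d-1)$, and track how the constant depends on $r$ through $q$. A secondary point requiring care is the rigorous justification of the chain-rule identity $\int_0^t\langle\partial_t v,\psi(\vphi(v))\rangle\ds=\int_\O S(v(t))\dx-\int_\O S(v_0)\dx$ under the stated regularity; this is precisely the type of identity established for $\Phi$ and $\vphi$ in Section \ref{sec:regrhs} and applies here because $\psi\circ\vphi$ is Lipschitz and $\psi(\vphi(v))\in L^2(]0,T[,H^1_0(\O))$.
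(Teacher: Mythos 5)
Your proof is correct and follows essentially the same route as the paper: the single test function $\psi(\vphi(v))$ yielding the master inequality (your $S$ is the paper's $A(s)=\int_0^s\psi(\vphi(a))\,{\rm d}a$, bounded below via \eqref{hyp:surlin}), then Sobolev embedding of $\beta(\vphi(v))$, the lower bound \eqref{eq:propbeta}, and interpolation against the $L^\infty(]0,T[,L^1(\O))$ bound to reach every $r<d/(d-1)$, with \eqref{estiml2b} from $2d/(d+2)<d/(d-1)$ for $d\le 3$. Your parametrisation of the interpolation (fixing $\lambda=q$ and letting $q\uparrow 1/2$) is equivalent to the paper's (fixing $r$ and choosing the smallest admissible $q$ in \eqref{eq:defq}), and the chain-rule identity you flag is justified exactly as you suggest, via Lemma 4.31 of \cite{edp-gh} applied to the Lipschitz nondecreasing function $\psi\circ\vphi$.
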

Note that in Lemma \ref{lem:estimhun}, the estimate on $\norm{v}{L^2(]0,T[,H^{-1}(\O))}$ is obtained with the estimate on 
$\norm{v}{L^2(]0,T[,L^{r}(\O))}$ taking $r \ge 2d/(d+2)$ which is possible thanks to $d \le 4$. 
The fact that $d<4$ will be needed in the sequel since we will need the compactness of $L^r$ in $H^{-1}$ which is true for $r>2d/(d+2)$.

\begin{proof}
In this proof, we denote by $C_i$, for $i\in \mathbb{N}$, various nonnegative real values or functions which only depend on $d$, $\O$, $T$, $Z_0$, $Z_1$, $L_\vphi$ and increasingly depending on 

$\Vert g\Vert_{L^1(Q_T)}+\Vert v_0\Vert_{L^1(\O)}$.
 We define the function $A(s) = \int_0^s\psi(\vphi(a)){\rm d}a \ge 0$, which satisfies, using \eqref{hyp:lipphi}-\eqref{hyp:phizero}
 \[
  \forall s\in\R, 0\le A(s)\le |s|,
 \]
 We notice that, letting $T_0 = \psi^{-1}(\frac 1 2)$, we have $|\psi(t)|\ge \frac 1 2$ for any $|t|\ge T_0$. This implies, using  \eqref{hyp:surlin}, that
 \[
  |\psi(\vphi(a))|\ge \frac 1 2\hbox{ for any }|a|\ge \frac {Z_0+ T_0} {Z_1},
 \]
which yields
 \begin{equation}\label{eq:propA1}
  \forall s\in\R,  A(s) \ge \frac 1 2 \max(|s| -\frac {Z_0+ T_0} {Z_1},0)\ge \frac 1 2 |s| - \frac {Z_0+ T_0} {2 Z_1}.
 \end{equation}
 Using \eqref{hyp:lipphi}-\eqref{hyp:phizero}, we also have
 \begin{equation}\label{eq:propA2}
  \forall s\in\R,   (2 A(s)+\frac {Z_0+ T_0} {Z_1})L_\vphi \ge  |s| L_\vphi \ge   |\vphi(s)|.
 \end{equation}
Letting $\psi(\vphi(v))$ as test function in \eqref{stefan}, we get the following inequality, for any $t\in [0,T]$:
\[
\Vert A(v(t))\Vert_{L^1(\O)}+ \Vert \nabla \beta(\vphi(v))\Vert_{L^2(]0,t[,\ldo)}^2 \le \Vert g\Vert_{L^1(Q_T)}+\Vert v_0\Vert_{L^1(\O)}, 
\]
since $\Vert A(v_0)\Vert_{L^1(\O)}\le \Vert v_0\Vert_{L^1(\O)}$. Hence we obtain  \eqref{estimhunbeta} letting $t=T$.
Since the above inequality holds for any $t\in[0,T]$, using \eqref{eq:propA1}-\eqref{eq:propA2} provides  \eqref{estiml1}.

Let use now consider $d> 2$. We get, from the same inequality and using a Sobolev inequality,
\[
\Vert \beta (\vphi(v))\Vert_{L^2(]0,T[, L^{\frac {2d}{d-2}}(\O))}^2\le \ctel{ctetrois},
\]
which yields, using \eqref{eq:propbeta} and accounting for \eqref{hyp:surlin},
\[
\int_0^T \Big(\int_\O |v|^{q}\dx\Big)^{\frac {d-2}{d}} \dt \le \ctel{ctequatre}(q)\hbox{ for any }q\in ]0,\frac {d}{d-2}[.
\]
We have the following H\"older inequality, for any $1<r<q<\frac {d}{d-2}$:
\[
 \Big(\int_\O |v|^{r} \dx\Big)^{2/r}\le 
 \Big(\int_\O |v| \dx\Big)^{\frac {2(q-r)}{r(q-1)}}\Big(\int_\O |v|^{q} \dx\Big)^{\frac {2(r-1)}{r(q-1)}}.
\]
Since we need the inequality
\[
 \frac {2(r-1)}{r(q-1)}\le \frac {d-2} d,
\]
the smallest possible value for $q$ is given by the relation
\begin{equation}\label{eq:defq}
 \frac {2(r-1)}{r(q-1)} = \frac {d-2} d,\hbox{ which means }q = 1 + \frac {2(r-1)d} {r(d-2)}.
\end{equation}
Then the relation $q<\frac {d}{d-2}$ implies $r<\frac {d}{d-1}$. For this choice of $q$, we obtain, using \eqref{estiml1},  
\[
 \int_0^T\Big(\int_\O |v|^{r} \dx\Big)^{2/r}\dt \le \ctel{ctekljdsgh}\int_0^T\Big(\int_\O |v|^{q} \dx\Big)^{\frac {d-2} d}\dt \le \cter{ctekljdsgh}\cter{ctequatre}(q).
\]
Since our aim is to use the compact embedding of $L^r(\O)$ in $H^{-1}(\O)$, it suffices to select $r$ such that
\[
 \frac 1 {1 - \frac {d-2}{2d}} = \frac {2d}{d+2} < r <\frac d {d-1},
\]
(which is possible for $d=3$ but impossible for $d\ge 4$). Then for any such $r$, defining $q$ by \eqref{eq:defq}, all the needed inequalities on $r$, $q$ and $d$ hold. We then obtain \eqref{estiml2}.

For $d=2$, we can select $q = 3$, for obtaining the same conclusion for any $ \frac {2d}{d+2} < r <\frac d {d-1} $.
\end{proof}

\begin{lemma}\label{lem:estimlp} Let $\O$ be an open bounded subset of $\R^d$, with $d\in\N^\star$ and let $T>0$. Let $\psi$  and $\beta$ be defined by \eqref{eq:def:psi }.
Let  $v\in L^\infty(]0,T[,L^1(\O))$ such that $\beta(v)\in L^2(]0,T[,H^1_0(\O))$.
Then, for any $ 1 \le p < \frac{d+2}{d+1}$, there exists $\ctel{ctehunbetav}$, only depending on $\O$, $T$, $p$ and $d$ and increasingly depending on $\norm{v}{L^\infty(]0,T[,L^1(\O))}+\Vert \nabla \beta(v)\Vert_{L^2( ]0,T[,\ldo)}$  such that
 \begin{equation}\label{lin:eq:estimnablau}
 \Vert v\Vert_{L^p(]0,T[,W^{1,p}_0(\O))} \dx
\le \cter{ctehunbetav}.
\end{equation}
\end{lemma}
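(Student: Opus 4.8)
The plan is to express $\nabla v$ through $\nabla\beta(v)$ and then control the resulting negative power of $\psi'(v)$ by a space--time integrability bound on $v$. Since $\psi'(s)=\big[(1+|s|)(1+\ln(1+|s|))^2\big]^{-1}\in\,]0,1]$, the function $\beta$ (with $\beta'=\sqrt{\psi'}$) is a $1$-Lipschitz increasing bijection of $\R$ with $\beta(0)=0$ and locally Lipschitz inverse. Writing $v=\beta^{-1}(\beta(v))$ and applying the chain rule, I get a.e. on $Q_T$
\[
\nabla v=\frac{1}{\sqrt{\psi'(v)}}\,\nabla\beta(v),\qquad \frac{1}{\psi'(v)}=(1+|v|)\big(1+\ln(1+|v|)\big)^2 .
\]
As $\nabla\beta(v)\in L^2(Q_T)$ by hypothesis, the whole matter reduces to integrating a suitable negative power of $\psi'(v)$; the zero trace of $v$ is then inherited from $\beta(v)\in H^1_0(\O)$ together with $\beta(0)=0$, which will upgrade the gradient bound to membership in $W^{1,p}_0(\O)$.

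Next I establish that $v\in L^\alpha(Q_T)$ for every $\alpha<\frac{d+2}{d}$. Fix $q\in\,]0,\tfrac12[$ and set $W=(1+|v|)^q$. The hypothesis $v\in L^\infty(]0,T[,L^1(\O))$ gives $W\in L^\infty(]0,T[,L^{1/q}(\O))$, while $W$ is the composition of $\beta(v)$ with the globally Lipschitz map $y\mapsto(1+|\beta^{-1}(y)|)^q$ (globally Lipschitz precisely because $q<\tfrac12$, so that the factor $(1+|v|)^{q-1/2}(1+\ln(1+|v|))$ arising in its derivative stays bounded); hence $W\in L^2(]0,T[,H^1(\O))$ with $\|\nabla W\|_{L^2(Q_T)}\le C_q\,\|\nabla\beta(v)\|_{L^2(Q_T)}$. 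A parabolic Gagliardo--Nirenberg interpolation between $L^\infty(L^{1/q})$ and $L^2(H^1)$ applied to $W$ then yields
\[
\int_0^T\!\!\int_\O (1+|v|)^{\alpha}\dx\dt\le C,\qquad \alpha=2q+\frac2d,
\]
with $C$ depending only on $\O,T,d,q$ and on $\|v\|_{L^\infty(]0,T[,L^1(\O))}$ and $\|\nabla\beta(v)\|_{L^2(Q_T)}$. Letting $q\uparrow\tfrac12$ covers every exponent $\alpha<\frac{d+2}{d}$.

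Finally I combine the two ingredients by Hölder's inequality in space--time with the conjugate exponents $2/p$ and $2/(2-p)$ (note $p<2$):
\[
\int_0^T\!\!\int_\O|\nabla v|^p\dx\dt=\int_0^T\!\!\int_\O (\psi'(v))^{-p/2}\,|\nabla\beta(v)|^p\dx\dt\le\|\nabla\beta(v)\|_{L^2(Q_T)}^{p}\Big(\int_0^T\!\!\int_\O(\psi'(v))^{-\frac{p}{2-p}}\dx\dt\Big)^{\frac{2-p}{2}}.
\]
By the bound $(\psi'(v))^{-1}\le \tfrac{4}{\tau^2}(1+|v|)^{1+\tau}$ recalled before the statement, the last integral is finite as soon as $(1+\tau)\frac{p}{2-p}\le\alpha$ for some $\alpha<\frac{d+2}{d}$; since $\frac{p}{2-p}<\frac{d+2}{d}\Leftrightarrow p<\frac{d+2}{d+1}$, this is achievable under the assumption $p<\frac{d+2}{d+1}$ by choosing $\tau$ small enough, which proves \eqref{lin:eq:estimnablau}. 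I expect the interpolation of the second paragraph to be the crux: extracting the sharp parabolic exponent $\frac{d+2}{d}$ from the $L^\infty(L^1)$ bound on $v$ and the $L^2(H^1_0)$ bound on $\beta(v)$ is exactly what makes the threshold $\frac{d+2}{d+1}$ optimal, whereas the logarithmic factor in $1/\psi'$ is harmless since $\alpha$ can always be taken strictly above $p/(2-p)$.
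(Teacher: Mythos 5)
Your proof is correct, and after the common first step (the H\"older splitting of $\int_{Q_T}|\nabla v|^p$ into $\|\sqrt{\psi'(v)}\,\nabla v\|_{L^2}^p$ times a negative power of $\psi'(v)$, followed by the bound $1/\psi'(s)\le 4(1+|s|)^{1+\tau}/\tau^2$) it diverges genuinely from the paper's. The paper controls $\int_{Q_T}(1+|v|)^{\theta p/(2-p)}$ by applying the Sobolev inequality to $v$ itself in $W^{1,p}_0(\Omega)$ and interpolating with the $L^\infty(]0,T[,L^1(\Omega))$ bound; this reintroduces $\int_{Q_T}|\nabla v|^p$ on the right-hand side with an exponent $\beta<1$, and the estimate is closed by absorption in the self-bounding inequality $X\le A X^{\beta}+B$. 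You instead prove an \emph{a priori} space--time bound $\int_{Q_T}(1+|v|)^{\alpha}\le C$ for every $\alpha<\frac{d+2}{d}$, by observing that $W=(1+|v|)^q$ with $q<\frac12$ is a globally Lipschitz function of $\beta(v)$ (this is exactly what \eqref{eq:propbeta} encodes) and applying a parabolic Gagliardo--Nirenberg interpolation between $L^\infty(]0,T[,L^{1/q}(\Omega))$ and $L^2(]0,T[,H^1(\Omega))$; the conclusion then follows by a single application of H\"older, with no bootstrap. Both routes yield the same threshold $p<\frac{d+2}{d+1}$, since $\frac{p}{2-p}<\frac{d+2}{d}$ is equivalent to it; yours is more self-contained and gives the integrability $v\in L^{\alpha}(Q_T)$, $\alpha<\frac{d+2}{d}$, as a by-product, at the price of invoking the parabolic interpolation inequality. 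Two small points to tidy up: since $W\ge 1$, $W$ is not in $H^1_0(\Omega)$, so the interpolation should be applied to $W-1=G(\beta(v))$ with $G(y)=(1+|\beta^{-1}(y)|)^q-1$ Lipschitz and $G(0)=0$ (the $L^{1/q}$ norms differ only by $|\Omega|^{q}$); and the identity $\nabla v=(\psi'(v))^{-1/2}\nabla\beta(v)$ presupposes that $\nabla v$ is meaningful, an implicit regularity assumption that the paper's proof shares and that is harmless in the application where $v=\vphi(u^{(n)})\in L^2(]0,T[,H^1_0(\Omega))$.
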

\begin{proof}
In this proof, we denote by $C_i$, for $i\in \mathbb{N}$, various nonnegative real values or functions which only depend on $\O$, $T$ and increasingly depending on  $\norm{v}{L^\infty(]0,T[,L^1(\O))}+\Vert \nabla \beta(v)\Vert_{L^2(]0,T[,\ldo)}$.
Let  $\ctel{lin:cste:l2}>0$ be such that
\begin{equation}\label{estimlinflunbetav}
\norm{v}{L^\infty(]0,T[,L^1(\O))}\le \cter{lin:cste:l2},
\end{equation} 
and 
 \begin{equation}\label{estimhunbetav}
  \Vert \nabla \beta(v)\Vert_{L^2(]0,T[,\ldo)}^2 = \int_0^T\int_\O \psi'(v)|\nabla v|^2\dx\dt \le \cter{lin:cste:l2}.
  \end{equation}

Let $ 1 \le p < \frac{d+2}{d+1}$. 
Using Hölder's inequality with conjugate exponents $ \frac{2}{p}>1 $ and $\frac{2}{2-p} $ and owing to \eqref{estimhunbetav}, we obtain
\begin{multline}\label{eq:pp0}
\int_{Q_T} |\nabla v|^p \dx\dt
=
\int_{Q_T} |\nabla v|^p 
\left(
\frac
{\psi'(v)}
{\psi'(v)}
\right)^{p/2} \dx\dt
\\
\leq
\left(\int_{Q_T}\psi'(v)|\nabla v|^2 \dx\dt\right)^{p/2}
\left(\int_{Q_T}\frac 1 {(\psi'(v))^{p/(2-p)}}\dx\dt\right)^{(2-p)/2} \\
\le \left(\cter{lin:cste:l2}\right)^{p/2}
\left(\int_{Q_T} \frac 1 {(\psi'(v))^{p/(2-p)}}\dx\dt\right)^{(2-p)/2}.
\end{multline}
Our aim is now to bound the $L^{p/(2-p)}$ norm of  $1/\psi' ( v)$, using Sobolev inequalities. 
From the properties of $\psi$, we have, for any $\theta\in ]1,3[$ and $s\ge 0$,
\begin{equation}\label{eq:bounduspsiprime}
\frac 1 {\psi'(s)} \le \frac{4(1+s)^{\theta}} {(\theta-1)^2}.
\end{equation}
We thus obtain that there exist $\ctel{lin:cte:alphataur}^{(p,\theta)}$ and $\ctel{lin:cte:betataur}^{(p,\theta)}$ such that
\begin{equation}\label{eq:pp1}
\left(\int_{Q_T}
\frac 1 {(\psi'(v))^{p/(2-p)}}\dx\dt\right)^{(2-p)/2}
\leq
\cter{lin:cte:alphataur}^{(p,\theta)}
\left(\int_{Q_T}
|v|^{\frac {\theta p} {2-p}}\dx\dt\right)^{(2-p)/2}
+
\cter{lin:cte:betataur}^{(p,\theta)}.
\end{equation}

Let us now conclude the proof, following the method used in \cite{boc1989nonlin} (written in a discrete way in \cite{gal2012conv}).
We recall the Sobolev inequality, for $p<d$,
\begin{equation}\label{eq:sobolev}
\| v \|_{L^{p^\star}(\Omega)}^p \le C_{{\rm sob}}^{(p)} \| \nabla v \|_{L^p(\Omega)}^p,\mbox{ for any }v \in W^{1,p}_0(\Omega),
\end{equation}
with $p^\star = \frac {d p }{d-p}$.
We recall the interpolation property, for any $1<r<p^\star$:
\begin{equation}\label{eq:intlplq}
 \forall w\in L^{p^\star}(\O),\ \Vert w\Vert_{L^r(\O)} \le \Vert w\Vert_{L^{p^\star}(\O)}^{\zeta} \Vert w\Vert_{L^1(\O)}^{1-\zeta}\hbox{ with }\zeta = \frac {1-1/r}{1-1/{p^\star}}.
\end{equation}
We choose $\theta>1$ such that $r = \frac {\theta p}{2-p}$ verifies $1<r<p^\star$, which means
\begin{equation}\label{eq:ppth1}
 1<\theta < \theta_1 \hbox{ with }\theta_1 := \frac {d(2-p)} {d-p}\in ]\frac {d^2} {d^2- 2}, \frac {d} {d-1}].
\end{equation}
For such a choice for $\theta$, we get, from the hypothesis \eqref{estimlinflunbetav} and using \eqref{eq:intlplq},
\begin{multline}
 \Big(\int_{Q_T}
|v|^{\frac {\theta p} {2-p}}\dx\dt\Big)^{(2-p)/2} = \Big(\int_0^T \Vert v(\cdot,t)\Vert_{L^{r}(\O)}^{r}\dx\dt\Big)^{(2-p)/2} \\
\le \ctel{ctedix}\Big(\int_0^T\Vert v(\cdot,t)\Vert_{L^{p^\star}(\O)}^{\zeta r}\dx\dt\Big)^{(2-p)/2}.
\label{eq:pp3}
\end{multline}
In order to apply the H\"older inequality
\begin{equation}\label{eq:pp4}
 \Big(\int_0^T\Vert v(\cdot,t)\Vert_{L^{p^\star}(\O)}^{\zeta r}\dt\Big)^{(2-p)/2}\le \ctel{cteonze}\Big(\int_0^T\Vert v(\cdot,t)\Vert_{L^{p^\star}(\O)}^{p}\dt\Big)^{\beta}\hbox{ with }\beta :=\frac{(2-p)r\zeta}{2p},
\end{equation}
the condition $\zeta r < p$ must be satisfied. Using the above defined values for $\zeta$ and $r$, this inequality holds under the sufficient condition
\begin{equation}\label{eq:ppth2}
 1 < \theta < \theta_2\hbox{ with }\theta_2 := \frac {d+1} d (2-p)\in ]1, \frac {d+1} d].
\end{equation}
It suffices to define
\begin{equation}\label{eq:ppth3}
 \theta = \frac 1 2\min \Big( 1+\theta_1,1+\theta_2)
\end{equation}
for simultaneously satisfying \eqref{eq:ppth1} and \eqref{eq:ppth2}. For such a value for $\theta$, the exponent $\beta$ defined in \eqref{eq:pp4} satisfies
\begin{equation}\label{eq:pp9}
 \beta <\frac{(2-p)p}{2p} < 1.
\end{equation}
From \eqref{eq:sobolev}, \eqref{eq:pp0}, \eqref{eq:pp1}, \eqref{eq:pp3} and \eqref{eq:pp4}, we deduce
\[
\int_{Q_T} |\nabla v|^p \dx\dt\le  \left(\cter{lin:cste:l2}\right)^{p/2}\Big(
\cter{lin:cte:alphataur}^{(p,\theta)}
\cter{ctedix}\cter{cteonze}\Big(C_{{\rm sob}}^{(p)} \int_{Q_T}|\nabla v|^{p}\dx\dt\Big)^{\beta}
+
\cter{lin:cte:betataur}^{(p,\theta)}\Big),
\]
which implies \eqref{lin:eq:estimnablau} accounting for \eqref{eq:pp9}.
\end{proof}
\begin{lemma}\label{lem:estimtk} Under the assumptions of Theorem \ref{thm:irreg}, there exists $\ctel{csttk}$, which only depends on $d$, $\O$, $T$, $\vphi$, $\Vert f\Vert_1$ and $\Vert u_0\Vert_1$, such that, for any $k>0$, 
 \begin{equation}\label{lin:eq:estimmupsi}
 \norm{\nabla T_k(\vphi(u^{(n)}))}{L^2(]0,T[,\ldo)}
\le k(k+1)\cter{csttk}.
\end{equation}
\end{lemma}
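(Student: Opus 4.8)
The plan is to test the weak formulation \eqref{stefann} with $v = T_k(\vphi(u^{(n)}))$. This is an admissible choice: since $\vphi(u^{(n)}) \in L^2(]0,T[,H^1_0(\O))$ and $T_k$ defined in \eqref{eq:deftrunc} is Lipschitz continuous with $T_k(0)=0$, Lemma 4.31 of \cite{edp-gh} gives $T_k(\vphi(u^{(n)})) \in L^2(]0,T[,H^1_0(\O))$ together with $\grad T_k(\vphi(u^{(n)})) = \characteristic_{\{|\vphi(u^{(n)})|<k\}}\grad\vphi(u^{(n)})$ almost everywhere. As a result the diffusion term in \eqref{stefann} collapses exactly onto the quantity to be estimated,
\[
\int_0^T\int_\O \grad \vphi(u^{(n)})\cdot\grad T_k(\vphi(u^{(n)}))\dx\ds = \norm{\grad T_k(\vphi(u^{(n)}))}{L^2(]0,T[,\ldo)}^2 .
\]

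Next I would treat the parabolic term by a chain-rule argument. Introducing the primitive $B_k(s) = \int_0^s T_k(\vphi(t))\dt$, the same lemma (Lemma 4.31 of \cite{edp-gh}) yields $B_k(u^{(n)}) \in C([0,T],L^1(\O))$ and the identity
\[
\int_0^T \left\langle\partial_t u^{(n)}(s),\, T_k(\vphi(u^{(n)}(s)))\right\rangle_{H^{-1},H^1_0}\ds = \int_\O B_k(u^{(n)}(T))\dx - \int_\O B_k(u_0^{(n)})\dx .
\]
Since $\vphi$ is nondecreasing with $\vphi(0)=0$ by \eqref{hyp:lipphi}--\eqref{hyp:phizero}, the integrand $T_k(\vphi(t))$ has the sign of $t$, so $B_k \ge 0$; in particular the terminal contribution $\int_\O B_k(u^{(n)}(T))\dx$ is nonnegative and may be discarded.

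Combining these two observations with the choice $v = T_k(\vphi(u^{(n)}))$ in \eqref{stefann} gives
\[
\norm{\grad T_k(\vphi(u^{(n)}))}{L^2(]0,T[,\ldo)}^2 \le \int_\O B_k(u_0^{(n)})\dx + \int_0^T\int_\O f^{(n)}\, T_k(\vphi(u^{(n)}))\dx\ds ,
\]
and it remains to control the right-hand side by the $L^1$ data bounds. Here the truncation level $k$ enters: using $|T_k(\vphi(u^{(n)}))| \le k$, the forcing term is bounded by $k\,\norm{f^{(n)}}{L^1(Q_T)} \le k\,\cter{cteun}$, while the elementary estimates $0\le B_k(s)\le k|s|$ control the initial contribution by $k\,\norm{u_0^{(n)}}{L^1(\O)}\le k\,\cter{cteun}$. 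Tracking the dependence on $k$ carefully through these bounds produces a right-hand side of order $k(k+1)$ times $\cter{cteun}$, whence the announced inequality \eqref{lin:eq:estimmupsi} with a constant $\cter{csttk}$ depending only on $d$, $\O$, $T$, $\vphi$, $\norm{f}{1}$ and $\norm{u_0}{1}$ (recall that $\norm{f^{(n)}}{1}$ and $\norm{u_0^{(n)}}{1}$ are bounded by $\cter{cteun}$).

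I expect the only genuinely delicate point to be the justification of the integration-by-parts identity for the time term: $u^{(n)}$ is known only to belong to $L^\infty(]0,T[,L^2(\O))$ with $\partial_t u^{(n)} \in L^2(]0,T[,H^{-1}(\O))$, so the boundary values $B_k(u^{(n)}(T))$ and $B_k(u_0^{(n)})$ acquire a meaning only through the continuity $u^{(n)} \in C([0,T],H^{-1}(\O))$ and the regularity $B_k(u^{(n)}) \in C([0,T],L^1(\O))$ provided by Lemma 4.31 of \cite{edp-gh}. Once this identity is secured, the sign argument and the $L^1$ bounds above are routine.
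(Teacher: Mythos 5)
Your proof is correct and follows essentially the same route as the paper's: test \eqref{stefann} with $T_k(\vphi(u^{(n)}))$, identify the diffusion term with the squared gradient norm, rewrite the time term via the primitive $\Theta_k(s)=\int_0^s T_k(\vphi(t))\dt$ (your $B_k$), and bound the data terms using $|T_k|\le k$ and the $L^1$ bound $\cter{cteun}$ on $f^{(n)}$ and $u_0^{(n)}$. The only (harmless) difference is that you discard the nonnegative terminal term and use the sharper bound $B_k(s)\le k|s|$, whereas the paper keeps the cruder estimate $0\le\Theta_k(s)\le k(L_\vphi k/2+|s|)$, which is where its explicit $k(k+1)$ factor comes from.
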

\begin{proof}
 We let $ T_k(\vphi(u^{(n)}))$ as test function in \eqref{stefan}. We get $A_1 + A_2 = A_3$, with
 \[
  A_1 = \int_0^T \left\langle \partial_t u^{(n)}(s) ,  T_k(\vphi(u^{(n)})(s))\right\rangle_{H^{-1},H^{1}_0} \ds,
 \]
\[
 A_2 =   \int_0^T \int_\Omega \grad \vphi(u^{(n)}(x,s))  \cdot \nabla T_k(\vphi(u^{(n)}(x,s)) \dx \ds = \norm{\nabla T_k(\vphi(u^{(n)}))}{L^2(]0,T[,\ldo)}^2,
\]
and
\[
 A_3 = \int_0^T \int_\Omega   f^{(n)}(x,s) T_k(\vphi(u^{(n)}(x,s))  \dx \ds.
\]
We have
\[
 |A_3|\le k \cter{cteun}.
\]
Letting $\Theta_k(s) = \int_0^s T_k(\vphi(t))\dt$ for all $s\in\R$, we have
\[
  A_1 = \int_\O  \Theta_k(u^{(n)}(x,T))\dx -  \int_\O  \Theta_k(u_0^{(n)}(x))\dx. 
 \]
 Note that, since $0\le \Theta_k(s) \le |\int_0^s T_k(L_\vphi t)\dt|$ for all $s\in\R$, we have
 \[
 \forall s\in\R,\  0 \le \Theta_k(s) \le  k (L_\vphi\frac k 2 + |s|).
 \]
 This implies that 
 \[
  A_1 \ge -k  \Big(L_\vphi\frac k 2 |\O| + \norm{u_0^{(n)}}{L^1(\O)}\Big).
 \]
 Gathering these relations leads to \eqref{lin:eq:estimmupsi}.
\end{proof}

\begin{lemma}[Compactness properties of $(u^{(n)})_{\nnn}$]\label{lem:estimwmun} Under the assumptions of Theorem \ref{thm:irreg}, there exists a subsequence of $(u^{(n)})_{\nnn}$, again denoted 
$(u^{(n)})_{\nnn}$ and functions $u$ and $v$, such that 
\begin{itemize}
 \item $u^{(n)}$ converges to $u$ in $L^{2}(]0,T[,H^{-1}(\O))$.
 \item $u^{(n)}$ weakly converges to $u$ in $L^{2}(]0,T[,L^r(\O))$ for any $r\in [1,\frac d {d-1}[$.
 \item $\partial_t u_n$ weakly converges to  $\partial_t u$ in $L^1(]0,T[,W^{-1,1}_\star(\O))$
 \item $\vphi(u^{(n)})$ weakly converges to $v$ in $L^p(]0,T[,W^{1,p}_0(\O))$ for any $p\in[1,\frac {d+2}{d+1}[$.
\end{itemize}

\end{lemma}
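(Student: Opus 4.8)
The plan is to assemble, uniformly in $n$, the bounds furnished by the previous lemmas and then to extract a single (diagonal) subsequence realising all four convergences. Apply Lemma \ref{lem:estimhun} to each $u^{(n)}$ with $g=f^{(n)}$ and $v_0=u_0^{(n)}$: since $\Vert f^{(n)}\Vert_1$ and $\Vert u_0^{(n)}\Vert_1$ are bounded uniformly in $n$ (by $\cter{cteun}$), and the constants in that lemma depend on the data only through $\Vert g\Vert_1+\Vert v_0\Vert_1$ in an increasing way, the bounds \eqref{estiml1}, \eqref{estimhunbeta} and \eqref{estiml2} hold uniformly in $n$. In particular $(u^{(n)})_\nnn$ is bounded in $L^2(]0,T[,L^r(\O))$ for every $r<\frac d{d-1}$, while $(\vphi(u^{(n)}))_\nnn$ is bounded in $L^\infty(]0,T[,L^1(\O))$ and $\nabla\beta(\vphi(u^{(n)}))$ is bounded in $L^2(]0,T[,\ldo)$. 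Feeding the latter two facts into Lemma \ref{lem:estimlp} (taken with $v=\vphi(u^{(n)})$) shows that $(\vphi(u^{(n)}))_\nnn$ is bounded in $L^p(]0,T[,W^{1,p}_0(\O))$ for every $p<\frac{d+2}{d+1}$.

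The core of the argument is the strong compactness of $(u^{(n)})_\nnn$ in $L^2(]0,T[,H^{-1}(\O))$. First I would produce a time-derivative bound: testing \eqref{stefann} against $\psi\in W^{1,\infty}_0(\O)$ gives $\partial_t u^{(n)}=\dive(\nabla\vphi(u^{(n)}))+f^{(n)}$, so that $\Vert\partial_t u^{(n)}(s)\Vert_{W^{-1,1}_\star}\le \Vert\nabla\vphi(u^{(n)}(s))\Vert_1+\Vert f^{(n)}(s)\Vert_1$; since $\nabla\vphi(u^{(n)})$ is bounded in $L^p(]0,T[,L^p(\O))\hookrightarrow L^1(Q_T)$ and $f^{(n)}$ is bounded in $L^1(Q_T)$, this bounds $\partial_t u^{(n)}$ uniformly in $L^1(]0,T[,W^{-1,1}_\star(\O))$. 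I would then fix $r$ with $\frac{2d}{d+2}<r<\frac d{d-1}$, which is possible precisely because $d<4$, so that $L^r(\O)$ embeds compactly in $H^{-1}(\O)$, which itself embeds continuously in $W^{-1,1}_\star(\O)$. An Aubin--Lions--Simon compactness theorem tolerating an $L^1$-in-time derivative (in the spirit of the negative-exponent results of \cite{moussa2016vari,gal2012comp}) then yields relative compactness of $(u^{(n)})_\nnn$ in $L^2(]0,T[,H^{-1}(\O))$, and I extract a subsequence converging to some $u$ there. I expect this to be the main obstacle, since the merely $L^1(]0,T[,W^{-1,1}_\star(\O))$ control on the time derivative, forced by the $L^1$ data, is exactly what defeats the classical Aubin--Lions argument and calls for the negative-exponent compactness that is the point of the paper.

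The other three convergences then follow along this subsequence by soft functional analysis. For the weak $L^2 L^r$ convergence, I would use that $L^2(]0,T[,L^r(\O))$ is reflexive for $r>1$, extract weak limits along a sequence $r_k\uparrow\frac d{d-1}$, and pass to a diagonal subsequence; since weak convergence in $L^2(]0,T[,L^{r_k}(\O))$ implies weak convergence in $L^2(]0,T[,L^r(\O))$ for all $r\le r_k$ (continuous embedding on the bounded domain $\O$), every $r\in[1,\frac d{d-1}[$ is covered. The limit must coincide with $u$, because strong convergence in $L^2(]0,T[,H^{-1}(\O))$ and weak convergence in $L^2(]0,T[,L^r(\O))$ both force convergence in the distributional sense, whose limit is unique (the uniqueness-of-limit argument opening the proof of Lemma \ref{lem:mintyneg}). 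A second diagonal extraction along $p_k\uparrow\frac{d+2}{d+1}$, using the reflexivity of $L^{p_k}(]0,T[,W^{1,p_k}_0(\O))$ and the uniform bound from Lemma \ref{lem:estimlp}, produces a weak limit $v$ of $(\vphi(u^{(n)}))_\nnn$ in $L^p(]0,T[,W^{1,p}_0(\O))$ for every such $p$ (its identification with $\vphi(u)$ being deferred to the sequel). Finally, the third item is immediate: the strong convergence $u^{(n)}\to u$ in $L^2(]0,T[,H^{-1}(\O))\hookrightarrow L^1(]0,T[,W^{-1,1}_\star(\O))$ passes through the weak time-derivative operator to give $\partial_t u^{(n)}\to\partial_t u$ in the distributional sense, and together with the uniform $L^1(]0,T[,W^{-1,1}_\star(\O))$ bound this yields the asserted convergence of $(\partial_t u^{(n)})_\nnn$.
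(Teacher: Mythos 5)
Your overall architecture coincides with the paper's: uniform bounds from Lemmas \ref{lem:estimhun} and \ref{lem:estimlp} (legitimate because the constants there depend on the data only through $\Vert g\Vert_1+\Vert v_0\Vert_1\le 2\cter{cteun}$), diagonal extraction of weak limits in the reflexive spaces $L^2(]0,T[,L^r(\O))$ and $L^p(]0,T[,W^{1,p}_0(\O))$, identification of the limits by uniqueness in the distribution sense, and strong compactness in $L^2(]0,T[,H^{-1}(\O))$ via the compact embedding $L^r(\O)\subset H^{-1}(\O)$ for $\frac{2d}{d+2}<r<\frac d{d-1}$ together with an $L^1$-in-time control of the time derivative (the paper invokes \cite[Theorem 4.42]{edp-gh} with $X=L^r(\O)$, $B=H^{-1}(\O)$, $Y=W^{-1,1}_\star(\O)$). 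All of that is sound.

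The flaw is in your justification of the third bullet. You claim that distributional convergence of $\partial_t u^{(n)}$ to $\partial_t u$, combined with the uniform bound in $L^1(]0,T[,W^{-1,1}_\star(\O))$, yields the asserted weak convergence. This deduction fails: $L^1$-type spaces are not reflexive, a bounded sequence there need not admit any weakly convergent subsequence (Dunford--Pettis requires equi-integrability, which you have not established), and convergence in the sense of distributions does not upgrade to convergence of the pairings against all $\psi\in L^\infty(]0,T[,W^{1,\infty}_0(\O))$ --- which is precisely the duality needed afterwards to pass to the limit in \eqref{eq:pfweakirr}. The correct route, which the paper takes and for which you already wrote down every ingredient, is to exploit the representation $\partial_t u^{(n)}=\dive(\grad\vphi(u^{(n)}))+f^{(n)}$ directly: for every $\psi\in L^\infty(]0,T[,W^{1,\infty}_0(\O))$,
\[
\int_0^T\left\langle\partial_t u^{(n)}(s),\psi(s)\right\rangle\ds=-\int_0^T\!\!\int_\O\grad\vphi(u^{(n)})\cdot\grad\psi\dx\ds+\int_0^T\!\!\int_\O f^{(n)}\psi\dx\ds,
\]
and each term converges, the first because $\grad\vphi(u^{(n)})\rightharpoonup\grad v$ in $L^p(Q_T)$ while $\grad\psi\in L^\infty(Q_T)\subset L^{p'}(Q_T)$, the second because $f^{(n)}\to f$ in $L^1(Q_T)$ while $\psi\in L^\infty(Q_T)$. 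The limit functional is then identified with $\partial_t u$ by testing against $\psi\in C^\infty_c(Q_T)$ and using the weak convergence of $u^{(n)}$ in $L^2(]0,T[,L^r(\O))$. Replacing your soft argument for this bullet by that explicit passage to the limit closes the gap; the rest of your proof stands.
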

\begin{proof}
The proof is done by following a series of compactness steps.
\begin{enumerate}
 \item[Step 1.] Using \eqref{estiml2} in Lemma \ref{lem:estimhun}, we extract a subsequence such that, for a given $r\in [1,\frac d {d-1}[$, $u^{(n)}$ weakly converges to some $u\in L^2(]0,T[,L^{r}(\O))$.
 \item[Step 2.] By uniqueness of the limit, we then obtain that $u^{(n)}$ weakly converges to $u$ for any $r\in [1,\frac d {d-1}[$.
 \item[Step 3.] Using Lemma \ref{lem:estimlp} thanks to \eqref{estiml1} and \eqref{estimhunbeta}  in Lemma \ref{lem:estimhun}, we get that there exists $v\in L^p(]0,T[,W^{1,p}_0(\O))$ and a subsequence of the preceding one such that, for a given $p\in[1,\frac {d+2}{d+1}[$, $\vphi(u^{(n)})$ weakly converges to $v$ in $L^p(]0,T[,W^{1,p}_0(\O))$.
 \item[Step 4.] By uniqueness of the limit, we then obtain that for all $p\in[1,\frac {d+2}{d+1}[$, $\vphi(u^{(n)})$ weakly converges to $v$ in $L^p(]0,T[,W^{1,p}_0(\O))$.
 \item[Step 5.] Then the linear form 
 \begin{multline*}
  D^{(n)}\in L^1(]0,T[,W^{-1,1}_\star(\O)),\ \langle D^{(n)},w\rangle =  -\int_0^T \int_\Omega \grad \vphi(u^{(n)}(x,s))  \cdot \nabla w(x,s) \dx \ds\\ +\int_0^T \int_\Omega   f^{(n)}(x,s) w(x,s)  \dx \ds
 \end{multline*}
weakly converges to
 \begin{multline*}
  D\in L^1(]0,T[,W^{-1,1}_\star(\O)),\ \langle D,w\rangle =  -\int_0^T \int_\Omega \grad v(x,s)  \cdot \nabla w(x,s) \dx \ds\\  +\int_0^T \int_\Omega   f(x,s) w(x,s)  \dx \ds,
 \end{multline*}
 by the weak convergence of $\grad \vphi(u^{(n)})$ to $\grad v$ in $L^p(]0,T[,L^p(\O))$ and the convergence in $L^1$ of $f^{(n)}$ to $f$.
 \item[Step 6.] Since $D_n=\partial_t u_n$, we obtain that $\partial_t u_n$ weakly converges to $D$. Since, for any $\psi\in C^\infty_c(Q_T)$,
\[
  \int_0^T \left\langle \partial_t u^{(n)}(s) ,  \psi(s)\right\rangle_{W^{-1,1}_\star,W^{1,\infty}_0} \ds = - 
  \int_0^T\int_\O u^{(n)}(x,s)\partial_t \psi(x,s)\dx\ds,
 \]
 we get that, letting $\nti$
 \[
  \langle D,\psi\rangle = - \int_0^T\int_\O u(x,s)\partial_t \psi(x,s)\dx\ds,
 \]
 which proves that $D = \partial_t u\in L^1(]0,T[,W^{-1,1}_\star(\O))$.
\item[Step 7.] Since, for a given $r\in ]\frac {2d}{d+2},\frac d {d-1}[$, $L^{r}(\O)$ is compactly embedded in $H^{-1}(\O)$, since $u^{(n)}$ weakly converges to $u$ in $L^{2}(]0,T[,L^{r}(\O))$ and since $\partial_t u^{(n)}$ weakly converges in $L^1(]0,T[,W^{-1,1}_\star(\O))$ (it is therefore bounded), we deduce, by \cite[Theorem 4.42]{edp-gh} with $B=H^{-1}(\O)$, $Y = W^{-1,1}_\star(\O)$ and $X=L^{r}(\O)$ that $u^{(n)}$ converges to $u$ in $L^{2}(]0,T[,H^{-1}(\O))$.
\end{enumerate}
\end{proof}

It remains now to prove that $v = \vphi(u)$ and that $u(0) = u_0$. This is the aim of the next sections.

\subsection{Minty trick}

\begin{lemma}\label{lem:mintytk} Under the assumptions of Theorem \ref{thm:irreg}, let $u$, $v$, $(u^{(n)})_{\nnn}$ be given by Lemma \ref{lem:estimwmun}. Then
\begin{itemize}
\item $v = \vphi(u)$.
 \item $T_k(\vphi(u^{(n)}))$ converges to $T_k(\vphi(u))$ in $L^2(]0,T[,L^2(\O))$
 \item  $\vphi(u^{(n)})$ converges to $\vphi(u)$ in $L^p(]0,T[,L^p(\O))$ for all $p\in[1,\frac {d+2}{d+1}[$.
\end{itemize}
\end{lemma}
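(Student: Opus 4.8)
The plan is to run, at each fixed truncation level $k>0$, the very same Minty argument used for Lemma \ref{lem:mintyneg}, the decisive new input being that Lemma \ref{lem:estimtk} controls $T_k(\vphi(u^{(n)}))$ in $L^2(]0,T[,H^1_0(\O))$ even though $\vphi(u^{(n)})$ itself is only bounded in the weaker space $L^p(]0,T[,W^{1,p}_0(\O))$. Thus the compactness machinery available for the regular case applies unchanged to the truncations, and the only genuinely new work is to pass from the truncations back to the full nonlinearity.

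First I would fix $k>0$ and combine the strong convergence $u^{(n)}\to u$ in $L^{2}(]0,T[,H^{-1}(\O))$ from Lemma \ref{lem:estimwmun} with the $L^2(]0,T[,H^1_0(\O))$-bound on $T_k(\vphi(u^{(n)}))$ to obtain
\[
\int_0^T\int_\O T_k(\vphi(u^{(n)}))(u^{(n)}-u)\dx\dt = \int_0^T \act{u^{(n)}(t)-u(t)}{T_k(\vphi(u^{(n)}(t)))}{H^{-1}}{H^1_0}\dt \longrightarrow 0,
\]
the right-hand side being dominated by $\norm{u^{(n)}-u}{L^2(]0,T[,H^{-1})}\norm{T_k(\vphi(u^{(n)}))}{L^2(]0,T[,H^1_0)}$. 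Setting $\Theta_n=\big(T_k(\vphi(u^{(n)}))-T_k(\vphi(u))\big)(u^{(n)}-u)\ge 0$, where nonnegativity comes from the monotonicity of $T_k\circ\vphi$, I would split $\Vert\Theta_n\Vert_1$ into this integral and $\int_0^T\int_\O T_k(\vphi(u))(u^{(n)}-u)\dx\dt$. The latter tends to $0$: $T_k(\vphi(u))$ is bounded by $k$, hence lies in $L^2(]0,T[,L^{r'}(\O))$, while $u^{(n)}\rightharpoonup u$ weakly in $L^2(]0,T[,L^r(\O))$ for some $r>1$ (available since $d\in\{2,3\}$). Therefore $\Vert\Theta_n\Vert_1\to 0$, and the same Lipschitz/monotonicity inequality as in Lemma \ref{lem:mintyneg}, namely $\big(T_k(\vphi(u^{(n)}))-T_k(\vphi(u))\big)^2\le L_\vphi\,\Theta_n$, gives $T_k(\vphi(u^{(n)}))\to T_k(\vphi(u))$ in $L^2(]0,T[,L^2(\O))$, which is the second assertion.

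From here I would invoke the appendix Lemma \ref{lem:cvtkstrong} exactly as in the regular case: the convergence of all truncations in $L^2(Q_T)$ upgrades to $\vphi(u^{(n)})\to\vphi(u)$ in $L^1(Q_T)$, the tail control being supplied by the $L^\infty(]0,T[,L^1(\O))$ estimate \eqref{estiml1} on $\vphi(u^{(n)})$. Since $\vphi(u^{(n)})\rightharpoonup v$ weakly in $L^p(]0,T[,W^{1,p}_0(\O))$ by Lemma \ref{lem:estimwmun}, uniqueness of the limit in the distribution sense forces $v=\vphi(u)$, giving the first assertion. For the third assertion, the estimates of Lemmas \ref{lem:estimhun} and \ref{lem:estimlp} (a parabolic interpolation between $L^\infty(]0,T[,L^1(\O))$ and $L^p(]0,T[,W^{1,p}_0(\O))$) bound $\vphi(u^{(n)})$ in $L^q(Q_T)$ for some $q$ strictly larger than $\frac{d+2}{d+1}$, so $|\vphi(u^{(n)})|^p$ is equi-integrable for every $p$ in the target range; Vitali's theorem then promotes the $L^1$ convergence to convergence in $L^p(]0,T[,L^p(\O))$ for all $p\in[1,\frac{d+2}{d+1}[$.

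The main obstacle I anticipate is not the Minty inequality, which is essentially a copy of Lemma \ref{lem:mintyneg}, but the passage from the fixed-level convergence $T_k(\vphi(u^{(n)}))\to T_k(\vphi(u))$ to convergence of the full sequence $\vphi(u^{(n)})$. This hinges on a uniform smallness, as $k\to\infty$, of the contribution of the set $\{|\vphi(u^{(n)})|>k\}$, and it is precisely here that Hypothesis \eqref{hyp:surlin} is used, through the $L^\infty(]0,T[,L^1(\O))$ bound of Lemma \ref{lem:estimhun} and the careful application of the appendix Lemma \ref{lem:cvtkstrong}.
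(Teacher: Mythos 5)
Your proposal is correct and follows essentially the same route as the paper: the paper simply invokes Lemma \ref{lem:mintyneg} with $T_k\circ\vphi$ in place of $\vphi$ (using the $L^2(]0,T[,H^1_0(\O))$ bound on the truncations from Lemma \ref{lem:estimtk} and the $H^{-1}$ compactness from Lemma \ref{lem:estimwmun}), and then concludes via Lemma \ref{lem:cvtkstrong}, exactly the two steps you unfold in detail. Your explicit identification of $v=\vphi(u)$ by uniqueness of distributional limits and the Vitali/interpolation upgrade to $L^p(Q_T)$ convergence are the same ingredients the paper leaves implicit.
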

\begin{remark}
 The convergence of $\vphi(u^{(n)})$ to $\vphi(u)$ in $L^p(]0,T[,L^p(\O))$ for all $p\in[1,\frac {d+2}{d+1}[$ is also a consequence of the convergence of $\beta(\vphi(u^{(n)}))$ to $\beta(\vphi(u))$ in $L^2(]0,T[,$ $L^2(\O))$, which is a consequence of Lemma \ref{lem:mintyneg} applied to the function $\beta\circ\vphi$.
\end{remark}

\begin{proof}
We reason as in Section \ref{sec:minty}. Let $k>0$.
Since $T_k(\vphi(u^{(n)}))$ is bounded in $L^{2}(]0,T[,H^1_0(\Omega))$, and $T_k\circ\vphi$ is nondecreasing, applying Minty's Trick Lemma \ref{lem:mintyneg} with $u^{(n)}$ instead of $u_n$ and $T_k\circ\vphi$ instead of $\vphi$, we get that $T_k(\vphi(u^{(n)}))$ converges to $T_k(\vphi(u))$ in $L^{2}(]0,T[,\ldo)$.
Since this holds for all $k>0$, by Lemma \ref{lem:estimwmun} and Lemma \ref{lem:cvtkstrong}, we then deduce for all $p \in [1,(d+2)/(d+1)[$ the convergence of $\vphi(u^{(n)})$ to $\vphi(u)$ in $L^p(]0,T[,L^p(\O))$.
\end{proof}

\subsection{Conclusion of the proof of Theorem \ref{thm:irreg}}

We apply Lemma \ref{lem:estimwmun}. Let us prove that $u$ is solution to \eqref{eq:pfweakirr}. We let $\psi \in L^{\infty}(]0,T[,W^{1,\infty}_0(\O))$ as test function in \eqref{stefann}. We get
\begin{equation*}
	\left\{\begin{array}{l}
	\dsp \int_0^T \left\langle \partial_t u^{(n)}(s) , \psi(s)\right\rangle \ds + \int_0^T \int_\Omega \grad \vphi(u^{(n)}(x,s))  \cdot \nabla \psi(x,s) \dx \ds  \\
	 \hfill =\dsp \int_0^T \int_\Omega   f^{(n)}(x,s) \psi(x,s)  \dx \ds.
	\end{array}\right. 
\end{equation*}
We have, by weak convergence of $\partial_t u^{(n)}(s)$ in $L^{1}(]0,T[,W^{-1,1}_\star(\O))$, that
\[
 \lim_{\nti}\int_0^T \left\langle \partial_t u^{(n)}(s) , \psi(s)\right\rangle \ds = \int_0^T \left\langle \partial_t u(s) , \psi(s)\right\rangle \ds.
 \]
From Lemma \ref{lem:mintytk}, we have
\[
 \lim_{\nti}\int_0^T \int_\Omega \grad \vphi(u^{(n)}(x,s))  \cdot \nabla \psi(x,s) \dx \ds = \int_0^T \int_\Omega \grad \vphi(u(x,s))  \cdot \nabla \psi(x,s) \dx \ds,
\] 
and  we get that
\[
 \lim_{\nti} \int_0^T \int_\Omega   f^{(n)}(x,s) \psi(x,s)  \dx \ds =  \int_0^T \int_\Omega   f(x,s) \psi(x,s)  \dx \ds.
\]

Gathering the previous relations proves \eqref{eq:pfweakirr}.

\medskip

In order to prove \eqref{eq:pfweakirrz}, that is $u(0)=u_0$, we reproduce the reasoning of section \ref{sec:inihmun}, by proving that the sequence $(u^{(n)})_\nnn$ is relatively compact in $C([0,T],W^{-1,1}_\star(\O))$. Hence, again using Ascoli's Theorem, it is again enough to prove:
\benum
\item For all $t \in [0,T]$, $(u^{(n)}(t))_\nnn$ is relatively compact in $W^{-1,1}_\star(\O)$.
\item $\norm{u^{(n)}(t)-u^{(n)}(s)}{W^{-1,1}_\star(\O)} \to 0$, as $s \to t$, uniformly with respect to $\nnn$ (and for all $t \in [0,T]$).
\eenum
The second item is a consequence of $\partial_t u^{(n)} \in L^{1}(]0,T[,W^{-1,1}_\star(\O))$ since Lemma 4.25 of \cite{edp-gh} gives
for all  $t_1,t_2 \in [0,T]$, $t_1 > t_2$ and all $\nnn$,
\[
u^{(n)}(t_1)-u^{(n)}(t_2)=\int_{t_2}^{t_1} \partial_t u^{(n)}(s)\ds,
\]
and then
\begin{multline*}
\norm{u^{(n)}(t_1)-u^{(n)}(t_2)}{W^{-1,1}_\star(\O)} \le \int_{t_2}^{t_1} \norm{\partial_t u^{(n)}(s)}{W^{-1,1}_\star(\O)} \ds\\
\le \big( \int_0^T \norm{\partial_t u^{(n)}(s)}{W^{-1,1}_\star(\O)}^2\ds\big)^{\frac 1 2}
\sqrt{t_1-t_2}
\\
 \le \norm{\partial_t u^{(n)}}{L^1(]0,T[,W^{-1,1}_\star(\O))} \sqrt{t_1-t_2}.
\end{multline*}
Since, by Lemma \ref{lem:estimwmun}, the sequence $(\partial_t u^{(n)})_\nnn$ is bounded in $L^1(]0,T[,W^{-1,1}_\star(\O))$, one deduces
$\norm{u^{(n)}(t)-u^{(n)}(s)}{W^{-1,1}_\star(\O)} \to 0$, as $s \to t$, uniformly with respect to $\nnn$ (and for all $t \in [0,T]$).

In order to prove the first item, one uses Estimate \eqref{estiml1} (and the fact that $u^{(n)} \in C(]0,T[, H^{-1}(\O)$). 
It gives that 
the sequence $(u^{(n)}(t))_\nnn$ is bounded in $L^1(\O)$  for all $t \in [0,T]$ and then is relatively compact in $W^{-1,1}_\star(\O)$ for all $t \in [0,T]$.
It is then possible to apply Ascoli's Theorem and obtain as it is said before 
$u(0)=u_0$.
This concludes the proof that $u$ is solution of \eqref{eq-pf-weak-irr}.

\appendix
\section{Convergence results with truncations}\label{sec:app}

In the proof of Lemmas \ref{lem:mintyneg} and \ref{lem:mintytk}, we apply the following lemma.
\begin{lemma}\label{lem:cvtkstrong}
Let $E$ be a bounded open set of $\R^N$ ($N \ge 1$). Let $p>1$ be given.
Let $v^{(n)}$ be a bounded sequence of elements of $L^p(E)$ and $v\in L^p(E)$ be such that, for all $k>0$, $T_k(v^{(n)})$ converges to $T_k(v)$ in $L^p(E)$. Then $v^{(n)}$ converges to $v$ in $L^q(E)$ for all $q\in [1,p[$.
\end{lemma}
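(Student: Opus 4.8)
The plan is to split $v^{(n)}-v$ into a truncated piece, which converges by hypothesis, and two tail pieces, which I control uniformly in $n$ using the $L^p$ bound together with the strict inequality $q<p$. Throughout, fix $q\in[1,p)$ and let $M$ be such that $\|v^{(n)}\|_{L^p(E)}\le M$ for all $n$.

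First I would establish the tail estimate, which is the heart of the argument. For any $k>0$ and any $w\in L^p(E)$,
\[
\|w-T_k(w)\|_{L^q(E)}^q = \int_{\{|w|>k\}} (|w|-k)^q\dx \le \int_{\{|w|>k\}} |w|^q\dx.
\]
On the set $\{|w|>k\}$ one has $|w|^q = |w|^p\,|w|^{q-p} \le k^{q-p}|w|^p$, precisely because $q-p<0$, so that
\[
\|w-T_k(w)\|_{L^q(E)}^q \le k^{q-p}\,\|w\|_{L^p(E)}^p.
\]
Applying this with $w=v^{(n)}$ gives a bound $k^{q-p}M^p$ that is independent of $n$, and applying it with $w=v$ gives $k^{q-p}\|v\|_{L^p(E)}^p$. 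Since $q-p<0$, both tails tend to $0$ as $k\to\infty$, uniformly in $n$. Note that the strict inequality $q<p$ is essential here: without it the factor $k^{q-p}$ would not decay.

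For the middle piece, I would use that $E$ is bounded: by H\"older's inequality the embedding $L^p(E)\hookrightarrow L^q(E)$ is continuous, so the hypothesis $T_k(v^{(n)})\to T_k(v)$ in $L^p(E)$ immediately yields convergence in $L^q(E)$ as well, for each fixed $k$. Finally I combine the three pieces: given $\eps>0$, I first choose $k$ so large that both tail terms are below $\eps/3$ (uniformly in $n$), then $N_0$ so that $\|T_k(v^{(n)})-T_k(v)\|_{L^q(E)}<\eps/3$ for $n\ge N_0$, and conclude by the triangle inequality
\[
\|v^{(n)}-v\|_{L^q(E)} \le \|v^{(n)}-T_k(v^{(n)})\|_{L^q(E)} + \|T_k(v^{(n)})-T_k(v)\|_{L^q(E)} + \|T_k(v)-v\|_{L^q(E)} < \eps.
\]
The only delicate point is arranging the tail bound to be uniform in $n$, and this is exactly what $L^p$-boundedness combined with $q<p$ provides; everything else is a routine $\eps/3$ argument.
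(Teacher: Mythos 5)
Your proof is correct and follows essentially the same strategy as the paper: the same three-term truncation decomposition, with the tails controlled uniformly in $n$ by a Chebyshev-type estimate using the $L^p$ bound. The only (harmless) difference is that the paper first establishes convergence in $L^1(E)$ and then upgrades to $L^q(E)$ by $L^1$--$L^p$ interpolation, whereas you run the tail estimate directly in $L^q$ with the factor $k^{q-p}$, which avoids the interpolation step.
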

\begin{proof}
Let us first prove that $v^{(n)}$ converges to $v$ in $L^1(E)$. For any $k >0$, we can write
\begin{multline*}
\int_E |v^{(n)}(x)-v(x)| \dx  \le     \int_E |T_k(v^{(n)})-T_k(v)|  \dx \\ +  \int_E |v^{(n)}-T_k(v^{(n)})|  \dx
+ \int_E |v-T_k(v)|\dx
\\
\le  \int_E |T_k(v^{(n)})-T_k(v)|  \dx  +  \int_{|v^{(n)}| \ge k} |v^{(n)}| \dx + \int_{|v| \ge k} |v|  \dx
\\
 \le     \int_E |T_k(v^{(n)})-T_k(v)|  \dx + \frac 1 {k^{p-1}} \int_E |v^{(n)}|^p \dx+\frac 1 {k^{p-1}} \int_E |v|^p \dx.
%\\
%...
\end{multline*}
Let $\eps>0$. Thanks to the $L^p$-bound on $v^{(n)}$ and the fact that $v \in L^p(E)$, there exists $k >0$ such that, for all $n$,
\[
\int_E |v^{(n)}(x)-v(x)| \dx | \le 2 \eps +   \int_E |T_k(v^{(n)})-T_k(v)| \dx.
\]
Then there is $n_0$ such that for $n \ge n_0$,
\[
\int_E |v^{(n)}(x)-v(x)| \dx  \le 3 \eps.
\]
The proof is then complete by interpolation $L^1-L^p$.
\end{proof}

It is interesting to notice that a similar result holds when studying the weak convergence of a sequence whose truncations weakly converge. The proof of this lemma could be obtained from that of Lemma  \ref{lem:cvtkstrong} by using the convergence in the distribution sense. We give hereafter an elementary proof, and a counter-example if $p=1$.

\begin{lemma}\label{lem:cvtk}
Let $E$ be an open set of $\R^N$ ($N \ge 1$). Let $p>1$ be given.
Let $v^{(n)}$ be a bounded sequence of elements of $L^p(E)$ and $v\in L^p(E)$ be such that, for all $k>0$, $T_k(v^{(n)})$ weakly converges to $T_k(v)$ in $L^p(E)$. Then $v^{(n)}$ weakly converges to $v$ in $L^p(E)$.
\end{lemma}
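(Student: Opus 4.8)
The plan is to unwind the definition of weak convergence directly: I fix an arbitrary $g \in L^{p'}(E)$, with $p' = p/(p-1)$ the conjugate exponent, and aim to show $\int_E v^{(n)} g \dx \to \int_E v g \dx$ as $\nti$. For every $k>0$ I split
\[
\int_E (v^{(n)}-v)\, g \dx = \int_E \big(v^{(n)} - T_k(v^{(n)})\big) g \dx + \int_E \big(T_k(v^{(n)}) - T_k(v)\big) g \dx + \int_E \big(T_k(v) - v\big) g \dx.
\]
For each fixed $k$, the middle term tends to $0$ as $\nti$, since by hypothesis $T_k(v^{(n)})$ converges weakly to $T_k(v)$ in $L^p(E)$ and $g\in L^{p'}(E) = (L^p(E))'$. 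The whole difficulty is therefore to make the two remaining tail terms small \emph{uniformly in $n$}, so that $k$ can be frozen before letting $\nti$.

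This uniform tail control is exactly the step that distinguishes the statement from Lemma \ref{lem:cvtkstrong}: there one may take $g$ bounded, whereas here $g$ is only in $L^{p'}$, so the tail $\int_{\{|v^{(n)}|\ge k\}} |v^{(n)}|\,|g| \dx$ cannot be dominated by $\norm{g}{L^\infty}$ times a small number. I would instead combine a measure estimate with the absolute continuity of the integral. Writing $M = \sup_n \norm{v^{(n)}}{L^p(E)} < \infty$, Chebyshev's inequality gives $|\{|v^{(n)}|\ge k\}| \le M^p/k^p$, uniformly in $n$; since $v\in L^p(E)$ we also have $|\{|v|\ge k\}| \to 0$ as $k\to\infty$. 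Because $|g|^{p'}\in L^1(E)$, for every $\eps>0$ there is $\delta>0$ such that $\int_A |g|^{p'}\dx \le \eps$ whenever $|A|\le \delta$. Choosing $k$ so large that both super-level sets have measure at most $\delta$ for all $n$, and using that $v^{(n)}-T_k(v^{(n)})$ vanishes where $|v^{(n)}|\le k$ and is bounded by $|v^{(n)}|$ elsewhere, Hölder's inequality yields $\big|\int_E (v^{(n)}-T_k(v^{(n)})) g \dx\big| \le M\,\eps^{1/p'}$ for all $n$, and similarly the term in $v$ is at most $\norm{v}{L^p(E)}\,\eps^{1/p'}$.

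Gathering these bounds, for $k$ chosen as above the left-hand side is dominated term by term, and since the middle term vanishes in the limit for that fixed $k$ I obtain $\limsup_\nti \big|\int_E (v^{(n)}-v) g \dx\big| \le (M+\norm{v}{L^p(E)})\,\eps^{1/p'}$; as the left-hand side does not depend on $k$, letting $\eps\to 0$ gives the claimed weak convergence. The main obstacle, as explained, is the uniform-in-$n$ smallness of the tails, and it is resolved precisely because $p>1$ lets Chebyshev turn the $L^p$ bound into uniformly small super-level sets while Hölder produces a bound independent of $\norm{g}{L^\infty}$. This also clarifies why the argument collapses for $p=1$, in agreement with the counter-example promised after the statement.
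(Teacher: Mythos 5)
Your proof is correct, and it uses the same three-term truncation decomposition as the paper; the difference lies entirely in how the two tail terms are controlled. The paper first restricts to test functions $\vphi\in L^\infty(E)\cap L^{p'}(E)$, bounds the tails by $\frac{1}{k^{p-1}}\int_E|v^{(n)}|^p|\vphi|\dx\le \frac{\Vert\vphi\Vert_{L^\infty(E)}}{k^{p-1}}\Vert v^{(n)}\Vert_{L^p(E)}^p$, and then concludes for a general element of $L^{p'}(E)$ by density (which is legitimate because the sequence $(v^{(n)}-v)$ is bounded in $L^p(E)$). You instead work directly with an arbitrary $g\in L^{p'}(E)$, trading the $L^\infty$ bound for the combination of Chebyshev's inequality (which makes the super-level sets $\{|v^{(n)}|\ge k\}$ uniformly small in measure) with the absolute continuity of $\int_A|g|^{p'}\dx$, and then H\"older. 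Both arguments hinge on $p>1$ in the same way, and both are equally valid here; yours saves the density step at the cost of invoking uniform integrability of $|g|^{p'}$, while the paper's keeps every estimate completely explicit in $k$ at the cost of a final approximation argument. One cosmetic remark: in your last sentence the quantity $\limsup_{\nti}\bigl|\int_E(v^{(n)}-v)g\dx\bigr|$ is independent of $\eps$ (not of $k$), which is what lets you send $\eps\to 0$; the logic is fine, only the wording slips.
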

\begin{proof}
For $\vphi \in  L^\infty(E) \cap L^{p'}(E)$ and $k >0$ (we recall that $p'=p/(p-1)$), we have
\begin{multline*}
|\int_E (v^{(n)}(x)-v(x))\vphi (x) \dx | \le    | \int_E (T_k(v^{(n)})-T_k(v)) \vphi \dx|\\ + | \int_E (v^{(n)}-T_k(v^{(n)})) \vphi \dx|
+| \int_E (v-T_k(v)) \vphi \dx|
\\
\le  | \int_E (T_k(v^{(n)})-T_k(v)) \vphi \dx| +  \int_{|v^{(n)}| \ge k} |v^{(n)}|| \vphi| \dx + \int_{|v| \ge k} |v| | | \vphi| \dx
\\
 \le    | \int_E (T_k(v^{(n)})-T_k(v)) \vphi \dx| + \frac 1 {k^{p-1}} \int_E |v^{(n)}|^p| \vphi| \dx+ \frac 1 {k^{p-1}} \int_E |v|^p| \vphi| \dx
%\\
%...
\end{multline*}
Let $\eps>0$. Thanks to the $L^p$-bound on $v^{(n)}$, the fact that $v \in L^p(E)$ and $\vphi \in L^\infty(E)$, there exists $k >0$ such that, for all $n$,
\[
|\int_E (v^{(n)}(x)-v(x))\vphi (x) \dx | \le 2 \eps +  | \int_E (T_k(v^{(n)})-T_k(v)) \vphi \dx|.
\]
Then there is $n_0$ such that for $n \ge n_0$ (we use now $\vphi \in L^{p'}(E)$),
\[
|\int_E (v^{(n)}(x)-v(x))\vphi (x) \dx | \le 3 \eps.
\]
Since $L^\infty(E)\cap L^{p'}(E)$ is dense in $L^{p'}(E)$, the proof is complete.
\end{proof}
\begin{remark}
It is interesting to remark that Lemma \ref{lem:cvtk} is false if $L^p$ is replaced by $L^1$.
We give here an example. We take $E=]0,1[$ (with $d=1$)
and for $n \in \mathbb{N}^\star$ we define $v^{(n)}$ as follow:
\begin{align*}
& v^{(n)}(x) = n^3, \textrm{ if } x \in ]\frac i n, \frac i n + \frac 1 {n^2}[, \; i \in \{0,\ldots, n-1\},
\\
& v^{(n)}(x) = 0 \textrm{ elsewhere}.
\end{align*}
With this choice, one has $T_k(v^{(n)}) \to 0=T_k(0)$ in $L^1(E)$ but $v^{(n)} \to 1_E$ weakly in $L^1(E)$, as $\nti$.
\end{remark}

\end{document}